\documentclass[letterpaper, 10 pt, conference]{ieeeconf}
\IEEEoverridecommandlockouts
\usepackage{cite}
\usepackage{graphicx}
\usepackage{epstopdf}
\usepackage{amsmath,amssymb,amsfonts}

\usepackage{amsthm}

\usepackage{algorithmic}
\usepackage{textcomp}
\usepackage{xcolor}
\usepackage{algorithm}

\newtheorem{assumption}{Assumption}
\newtheorem{remark}{Remark}
\newtheorem{lemma}{Lemma}
\newtheorem{theorem}{Theorem}
\newtheorem{definition}{Definition}
 
\newtheorem{corollary}{Corollary}     

\usepackage{balance}
\usepackage{hyperref}
\title{\LARGE \bf
Stochastic Momentum Tracking Push-Pull for Decentralized Optimization over Directed Graphs
}

\author{Wenqi Fan, Yiwei Liao, Qing Xu, Bin Guo, and Songyi Dian
\thanks{This work was supported in part by the National Natural Science Foundation of China under Grant 62503344 and in part by the Postdoctoral Fellowship Program of CPSF under Grant GZC20241120. (Corresponding author: Yiwei Liao.)}%
\thanks{W. Fan, Y. Liao, Q. Xu, and S. Dian are with the School of Electrical Engineering, Sichuan University, Sichuan, 610065, P.R. China.
        {\tt\small (emails: fanjessie159@gmail.com, liaoyiwei@scu.edu.cn, xuqing382015@163.com, bguodxl@163.com, scudiansy@scu.edu.cn)}}%
}
    
\begin{document}

\thispagestyle{empty} 
\pagestyle{empty}     

\maketitle

\begin{abstract}
Decentralized optimization over directed networks is frequently challenged by asymmetric communication and the inherent high variance of stochastic gradients, which collectively cause severe oscillations and hinder algorithmic convergence. To address these challenges, we propose the Stochastic Momentum Tracking Push-Pull (SMTPP) algorithm, which tracks the momentum term rather than raw stochastic gradients within the Push-Pull architecture. This design successfully decouples the variance reduction capacity from the algebraic connectivity of the graph.Although the inherent topology mismatch of directed graphs precludes exact convergence under persistent stochastic noise, SMTPP rigorously compresses this unavoidable steady-state error floor into a minimal neighborhood determined by network connectivity and gradient variance. Furthermore, SMTPP guarantees convergence on any strongly connected directed graph. Extensive experiments on non-convex logistic regression demonstrate that the algorithm is highly robust to network connectivity. By effectively dampening topology-induced oscillations, SMTPP achieves convergence rates and overall performance that closely match those of centralized baselines, regardless of whether the network is sparse or dense.
\end{abstract}

\smallskip
\noindent \textbf{Keywords:} Decentralized optimization, Directed graphs, Momentum tracking, Non-convex optimization.

\section{Introduction}

Decentralized optimization has been extensively utilized in sensor networks, deep learning, and smart grids \cite{liu2022survey}, since decentralization helps relieve the high latency at the central server and reduce the communication overhead. Although decentralized stochastic gradient descent (DSGD) \cite{lian2017can} is widely used, its convergence performance is fundamentally influenced by  stochastic noise and data heterogeneity. Gradient tracking  is one popular remedy to mitigate this issue\cite{liao2022compressed, qu2017harnessing, pu2021distributed}.

To further accelerate convergence and dampen stochastic noise, the utilization of momentum \cite{cutkosky2019momentum} has attracted attention in decentralized optimization. Notably, \textit{momentum tracking} methods, which track the global average momentum, have achieved linear speedup in undirected networks \cite{huang2025distributed, gao2020periodic, alghunaim2022unified} and successfully accommodated communication compression \cite{islamov2025motef}. However, most momentum-based decentralized algorithms strictly assume undirected or balanced topologies \cite{huang2024faster, doostmohammadian2020momentum}.

In real-world networks, asymmetric transmission powers often dictate directed communication links. To handle the situation, the \textit{Push-Pull} framework \cite{pu2020push, you2025stochastic} utilizes two distinct directed graphs for pushing gradients and pulling models, becoming a standard for directed decentralized optimization \cite{liang2023linear, xin2018linear}. The extension of Push-Pull based methods has emerged in diverse scenarios, such as asynchronous setting \cite{zhu2024r} , time-varying graphs \cite{nguyen2023accelerated}, and  communication compression \cite{liao2023linearly}.

In non-convex settings with high variance, the inherent sparsity and asymmetry of directed topologies infinitely amplify the raw gradient noise, leading to severe oscillations and compromised performance. How to effectively unleash the variance-reduction power of \textit{momentum tracking} over general directed networks remains a vital challenge problem.

In this paper, we design the Stochastic Momentum Tracking Push-Pull  algorithm (SMTPP) to solve decentralized optimization problem in general directed networks. 
Crucially, it retains the property of tracking the global average momentum. 

When combined with persistent stochastic noise, standard tracking algorithms suffer from amplified oscillations and a large steady-state error floor of $\mathcal{O}(\sigma^2)$. Although fundamental asymmetry mathematically prevents exact convergence to zero with a constant step size, our theoretical analysis reveals that SMTPP gracefully compresses this topology-induced error floor to $\mathcal{O}(\lambda^2\sigma^2)$. By tracking the momentum rather than gradients, SMTPP effectively decouples the noise reduction capacity from the algebraic connectivity of the graph. Extensive experiments on non-convex logistic regression demonstrate that SMTPP outperforms state-of-the-art baselines, drastically suppressing oscillations in  sparse directed networks.

The remainder of the paper is organized as follows. Section II introduces the problem formulation and establishes the preliminary lemmas. Section III presents the main algorithm and the convergence analysis. Section IV provides numerical experiments to validate our theoretical findings. Finally, Section V concludes the paper.

\section{Problem Setup}
This section outlines the optimization problem on directed networks and introduces the necessary assumptions regarding the communication topology and objective functions.

\subsection{Problem Formulation}

Consider a network of $n$ agents interacting on a directed graph $\mathcal{G} = (\mathcal{V}, \mathcal{E})$, where $\mathcal{V} = \{1, \dots, n\}$ is the set of agents and $\mathcal{E} \subseteq \mathcal{V} \times \mathcal{V}$ is the set of edges. An edge $(j, i) \in \mathcal{E}$ indicates that agent $i$ can receive information from agent $j$. 

The agents cooperatively solve the following non-convex stochastic optimization problem:
\begin{equation}
    \min_{x \in \mathbb{R}^d} F(x) := \frac{1}{n} \sum_{i=1}^n f_i(x), \label{eq:problem}
\end{equation}
where $f_i : \mathbb{R}^d \to \mathbb{R}$ is the local smooth loss function accessible only to agent $i$ and $f_i = \mathbb{E}_{\xi_{i}\sim \mathcal{D}_{i}}F_{i}(x;\xi_{i})$. In the stochastic setting, agent $i$ queries a stochastic oracle to obtain an unbiased estimate $g_i(x; \xi_i)$ of the true gradient $\nabla f_i(x)$.

\subsection{Assumptions}

In this subsection, we present the following standard assumptions regarding communication topology, objective functions, and stochastic oracles.

To address the potential asymmetry of communication, we employ two distinct mixing matrices: a row-stochastic matrix $\mathbf{R}$ for consensus and a column-stochastic matrix $\mathbf{C}$ for momentum tracking.
\begin{assumption} \label{ass:graph}
The communication network utilizes two directed graphs $\mathcal{G}_R = (\mathcal{V}, \mathcal{E}_R)$ and $\mathcal{G}_C = (\mathcal{V}, \mathcal{E}_C)$ for pulling parameters and pushing trackers, respectively. The graphs $\mathcal{G}_R$ and $\mathcal{G}_{C^\top}$  each contain at least one spanning tree, and there exists at least one pair of spanning trees that share a common root. The associated mixing matrices $\mathbf{R}, \mathbf{C} \in \mathbb{R}^{n \times n}$ satisfy
\begin{enumerate}
    \item $\mathbf{R}$ is row-stochastic and $\mathbf{C}$ is column-stochastic.
    \item The weights match the graph topology, i.e., $R_{ij} > 0$ if and only if $(j, i) \in \mathcal{E}_R$ or $i = j$, and $C_{ij} > 0$ if and only if $(j, i) \in \mathcal{E}_C$ or $i = j$.
    \item The diagonal elements are strictly positive, i.e., $R_{ii} > 0$ and $C_{ii} > 0$ for all $i \in \mathcal{V}$.
\end{enumerate}
\end{assumption}

\begin{remark}
Assumption \ref{ass:graph} ensures that $\mathbf{R}$ and $\mathbf{C}$ are primitive. According to the Perron-Frobenius theorem, there exist unique positive stationary vectors $\pi_R, \pi_C$ such that $\pi_R^\top \mathbf{R} = \pi_R^\top$ and $\mathbf{C}\pi_C = \pi_C$, with $\pi_R^\top \mathbf{1} = \mathbf{1}^\top \pi_C = 1$. The property above guaranties the geometric contraction of the matrix powers.
\end{remark}

\begin{assumption} \label{ass:func}
For each agent $i$, the local objective function $f_i$ is differentiable and bounded from below. Furthermore, gradients are $L$-Lipschitz continuous, i.e., for all $x, y \in \mathbb{R}^d$
\begin{equation}\label{Lsmooth}
    \| \nabla f_i(x) - \nabla f_i(y) \| \le L \| x - y \|.
\end{equation}
\end{assumption}

\begin{assumption} \label{ass:noise}
The stochastic gradient oracle $g_i(x; \xi_i)$ satisfies the following properties for all $x \in \mathbb{R}^d$
\begin{gather}\label{Assumption:gradient}
    \mathbb{E}_{\xi_i}[g_i(x; \xi_i)] = \nabla f_i(x), \\
    \mathbb{E}_{\xi_i}[\| g_i(x; \xi_i) - \nabla f_i(x) \|^2] \le \sigma^2.
\end{gather}
\end{assumption}

\begin{remark}
Assumption 3 only constrains the intra-node variance $\sigma^2$. Crucially, we do not require bounded data heterogeneity, which distinguishes our analysis from standard DSGD, allowing the proposed algorithm to handle arbitrary Non-IID data distributions.
\end{remark}

\section{Algorithm and Convergence Analysis}

In this section, we present the proposed SMTPP algorithm, analyze its tracking mechanism over directed graphs, and establish its theoretical convergence properties.

\subsection{The SMTPP Algorithm}
The proposed SMTPP algorithm (Algorithm 1) leverages the Push-Pull architecture to enable non-symmetric updates, elegantly bypassing the costly construction of doubly stochastic matrices in directed networks. While standard Push-Pull tracks raw stochastic gradients, which often leading to severe oscillations, SMTPP mitigates this by tracking the \textit{local momentum}. Specifically, each agent $i$ maintains three key variables: the model parameter $x_{i,k}$, the local momentum buffer $m_{i,k}$, and the momentum tracker $v_{i,k}$. 

\begin{algorithm}[htbp]
\caption{Stochastic Momentum Tracking Push-Pull (SMTPP)}
\label{alg:smtpp}
\textbf{Require:} Initial model $x_{i,0} = x_0$, step size $\eta > 0$, momentum coefficient $\lambda \in (0, 1]$, matrices $\mathbf{R}, \mathbf{C}$.
\begin{algorithmic}[1]
\setlength{\abovedisplayskip}{2pt}
\setlength{\belowdisplayskip}{2pt}
\setlength{\abovedisplayshortskip}{0pt}
\setlength{\belowdisplayshortskip}{0pt}
\STATE \textbf{Initialization:} Each agent $i$ sets local momentum and tracker to zero: $m_{i,0} = \mathbf{0}, \quad v_{i,0} = \mathbf{0}$.
\STATE Each agent computes initial gradient $g_{i,0} = \nabla F_i(x_{i,0}; \xi_{i,0})$.
\FOR{iteration $k = 0, 1, \dots, K-1$}
    \FOR{each agent $i \in \{1, \dots, n\}$ in parallel}
        \STATE \textbf{Model Update (Pull):} 
        \begin{equation}
            x_{i,k+1} = \sum_{j=1}^n R_{ij} (x_{j,k} - \eta v_{j,k}) \label{eq:pull}
        \end{equation}
        \begin{equation}
            g_{i,k+1} = \nabla F_i(x_{i,k+1}; \xi_{i,k+1}) \label{eq:grad}
        \end{equation}
        \begin{equation}
            m_{i,k+1} = (1 - \lambda)m_{i,k} + \lambda g_{i,k+1} \label{eq:momentum}
        \end{equation}
        \begin{equation}
            v_{i,k+1} = \sum_{j=1}^n C_{ij}v_{j,k} + (m_{i,k+1} - m_{i,k}) \label{eq:push}
        \end{equation}
    \ENDFOR
\ENDFOR
\end{algorithmic}
\end{algorithm}

\textbf{1) Consensus and Descent (Pull) - Eq. \eqref{eq:pull}.} Agent $i$ pulls the parameters $x_{j,k}$ and trackers $v_{j,k}$ from its in-neighbors. The row-stochastic matrix $\mathbf{R}$ ($\sum_j R_{ij}=1$) ensures that the convex combination preserves the scale of the model, effectively driving the network towards consensus. 

\textbf{2) Variance Reduction via Momentum - Eq. \eqref{eq:momentum}.} Instead of directly transmitting the highly varying stochastic gradient $g_{i,k+1}$, agent $i$ updates an exponential moving average $m_{i,k+1}$. The momentum coefficient $\lambda$ acts as a low-pass filter, significantly dampening the intra-node stochastic noise $\sigma^2$ and providing a smoother descent direction.

\textbf{3) Global Momentum Tracking (Push) - Eq. \eqref{eq:push}.} Agent $i$ pushes its tracker variable $v_{i,k}$ to its out-neighbors using the column-stochastic matrix $\mathbf{C}$ and incorporates the \textit{change} in its local momentum. Crucially, the column-stochasticity of $\mathbf{C}$ ensures that the sum of the tracking variables is conserved during communication. By adding the local momentum difference $m_{i,k+1} - m_{i,k}$, the network average of the trackers precisely matches the global average momentum at every step. This elegantly enables agents to achieve global momentum tracking without requiring a doubly stochastic matrix.

\textbf{Matrix Form.}
Let $\mathbf{X}_k =[x_{1,k}, \dots, x_{n,k}]^\top \in \mathbb{R}^{n \times d}$ collect the model parameters of all agents in iteration $k$. We similarly define the momentum matrix $\mathbf{M}_k \in \mathbb{R}^{n \times d}$, the tracker matrix $\mathbf{V}_k \in \mathbb{R}^{n \times d}$, and the stochastic gradient matrix $\mathbf{G}_k =[g_{1,k}, \dots, g_{n,k}]^\top \in \mathbb{R}^{n \times d}$. 
\begin{align}
    \mathbf{X}_{k+1} &= \mathbf{R}(\mathbf{X}_k - \eta \mathbf{V}_k), \label{eq:matrix_x} \\
    \mathbf{M}_{k+1} &= (1 - \lambda)\mathbf{M}_k + \lambda \mathbf{G}_{k+1}, \label{eq:matrix_m} \\
    \mathbf{V}_{k+1} &= \mathbf{C}\mathbf{V}_k + \mathbf{M}_{k+1} - \mathbf{M}_k. \label{eq:matrix_v}
\end{align}

\subsection{Preliminary Lemmas}

Due to the asymmetric Push-Pull architecture, we rigorously define the consensus average as $\bar{x}_k := \pi_R^\top \mathbf{X}_k$, while the uniform averages for the trackers and momentum are $\bar{v}_k := \frac{1}{n}\mathbf{1}^\top \mathbf{V}_k$ and $\bar{m}_k := \frac{1}{n}\mathbf{1}^\top \mathbf{M}_k$, respectively. Since the non-symmetric matrices $\mathbf{R}$ and $\mathbf{C}$ lack strict one-step contraction under standard norms, we introduce specific induced norms.

\begin{definition}[Induced Weighted Norms]
Given the primitive matrices $\mathbf{R}$ and $\mathbf{C}$, there exist invertible transformation matrices $\mathbf{S}_R, \mathbf{S}_C \in \mathbb{R}^{n \times n}$ such that we can define the weighted vector norms $\|\mathbf{x}\|_R := \|\mathbf{S}_R \mathbf{x}\|_2$ and $\|\mathbf{x}\|_C := \|\mathbf{S}_C \mathbf{x}\|_2$ for any $\mathbf{x} \in \mathbb{R}^n$. For a matrix $\mathbf{X} = [\mathbf{x}^{(1)}, \dots, \mathbf{x}^{(d)}] \in \mathbb{R}^{n \times d}$, the corresponding induced matrix norms are defined as
\begin{equation}
    \|\mathbf{X}\|_R := \sqrt{\sum_{j=1}^d \|\mathbf{x}^{(j)}\|_R^2}, \quad \|\mathbf{X}\|_C := \sqrt{\sum_{j=1}^d \|\mathbf{x}^{(j)}\|_C^2}.
\end{equation}
These norms are equivalent to the Frobenius norm $\|\cdot\|_F$. There exist constants $\delta_R, \delta_C \ge 1$ such that $\|\cdot\|_F \le \|\cdot\|_\nu \le \delta_\nu \|\cdot\|_F$ for $\nu \in \{R, C\}$.
\end{definition}

\begin{lemma}[Strict Geometric Mixing \cite{song2022compressed}]
\label{lem:mixing}
Under Assumption 1, let $\Pi_R = \mathbf{1}\pi_R^\top$ and $\Pi_C = \pi_C \mathbf{1}^\top$. There exist contraction factors $\rho_R, \rho_C \in (0, 1)$ such that for any matrix $\mathbf{X} \in \mathbb{R}^{n \times d}$ satisfies
\begin{align}
    \|\mathbf{R}\mathbf{X} - \Pi_R \mathbf{X}\|_R &\le \rho_R \|\mathbf{X} - \Pi_R \mathbf{X}\|_R, \\
    \|\mathbf{C}\mathbf{X} - \Pi_C \mathbf{X}\|_C &\le \rho_C \|\mathbf{X} - \Pi_C \mathbf{X}\|_C.
\end{align}
To fully decouple the system dynamics, we define three fundamental error metrics: the consensus error $\mathcal{E}_{x,k} := \mathbb{E}\|\mathbf{X}_k - \Pi_R\mathbf{X}_k\|_R^2$, the tracking error $\mathcal{E}_{v,k} := \mathbb{E}\|\mathbf{V}_k - \Pi_C\mathbf{V}_k\|_C^2$, and the momentum approximation error $\mathcal{E}_{m,k} := \mathbb{E}\|\mathbf{M}_k - \nabla\mathbf{F}(\mathbf{X}_k)\|_F^2$, where $\nabla\mathbf{F}(\mathbf{X}_k) \in \mathbb{R}^{n \times d}$ is the full gradient matrix.
\end{lemma}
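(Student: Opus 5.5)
The plan is to reduce the matrix statement to a column-wise vector statement, build the transformations $\mathbf{S}_R,\mathbf{S}_C$ from the spectral structure of $\mathbf{R}$ and $\mathbf{C}$, and then read off the contraction from the resulting block structure. Since $\|\cdot\|_R$ and $\|\cdot\|_C$ are defined column-wise and $\mathbf{R},\Pi_R$ act on each column separately, it suffices to prove, for every $\mathbf{x}\in\mathbb{R}^n$,
\[
\|(\mathbf{R}-\Pi_R)\mathbf{x}\|_R\le\rho_R\|(\mathbf{I}-\Pi_R)\mathbf{x}\|_R .
\]
Squaring and summing over the $d$ columns then gives the matrix inequality. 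The $\mathbf{C}$ case is identical with $\pi_C\mathbf{1}^\top$ in place of $\mathbf{1}\pi_R^\top$, or one can apply the $\mathbf{R}$ argument to $\mathbf{C}^\top$.

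The key algebraic facts come from the stationarity relations in the Remark after Assumption~\ref{ass:graph}. Using $\mathbf{R}\mathbf{1}=\mathbf{1}$ and $\pi_R^\top\mathbf{R}=\pi_R^\top$, we get
\[
\mathbf{R}\Pi_R=\Pi_R\mathbf{R}=\Pi_R^2=\Pi_R .
\]
Hence
\[
(\mathbf{R}-\Pi_R)\mathbf{x}=(\mathbf{R}-\Pi_R)(\mathbf{I}-\Pi_R)\mathbf{x}.
\]
Writing $\mathbf{y}=(\mathbf{I}-\Pi_R)\mathbf{x}$, which lies in the subspace $\{\mathbf{y}:\pi_R^\top\mathbf{y}=0\}$, the claim reduces to
\[
\|(\mathbf{R}-\Pi_R)\mathbf{y}\|_R\le\rho_R\|\mathbf{y}\|_R .
\]

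By Perron--Frobenius, $\mathbf{R}$ is primitive, so $1$ is a simple eigenvalue and every other eigenvalue has modulus strictly less than one. Consequently the spectral radius satisfies $\varrho(\mathbf{R}-\Pi_R)<1$. Fix $\rho_R\in(\varrho(\mathbf{R}-\Pi_R),1)$.

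The standard construction (Horn--Johnson, Lemma 5.6.10) then gives an invertible $\mathbf{S}_R$ with
\[
\|\mathbf{S}_R(\mathbf{R}-\Pi_R)\mathbf{S}_R^{-1}\|_2\le\rho_R .
\]
One takes the Jordan (or Schur) form of $\mathbf{R}-\Pi_R$ and rescales the off-diagonal entries by a diagonal matrix $\mathrm{diag}(1,t,t^2,\dots)$ with $t$ small enough. Since the norm is real we need a real $\mathbf{S}_R$; we obtain one from the real Jordan form, or equivalently we define $\|\mathbf{x}\|_R=\|\mathbf{S}_R\mathbf{x}\|_2$ with complex $\mathbf{S}_R$, which is still a norm on $\mathbb{R}^n$. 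Then
\[
\|(\mathbf{R}-\Pi_R)\mathbf{y}\|_R=\|\mathbf{S}_R(\mathbf{R}-\Pi_R)\mathbf{S}_R^{-1}\,\mathbf{S}_R\mathbf{y}\|_2\le\rho_R\|\mathbf{y}\|_R,
\]
as required.

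To match the Definition we finally rescale $\mathbf{S}_R$ by a positive constant so that $\sigma_{\min}(\mathbf{S}_R)\ge1$. This gives $\|\cdot\|_F\le\|\cdot\|_R$, and we set $\delta_R=\|\mathbf{S}_R\|_2\ge1$. The contraction inequality is invariant under this scaling.

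The main obstacle is the spectral step: showing $\varrho(\mathbf{R}-\Pi_R)<1$ and producing a real induced norm that achieves it. The spectral fact follows from primitivity together with the eigen-decomposition of $\mathbf{R}$ restricted to the complementary invariant subspace $\ker\pi_R^\top$. On that subspace $\mathbf{R}-\Pi_R$ acts as $\mathbf{R}$, and $\mathbf{R}-\Pi_R$ annihilates $\mathbf{1}$, so its spectrum is $\{0\}\cup(\mathrm{spec}(\mathbf{R})\setminus\{1\})$. Everything else is bookkeeping.
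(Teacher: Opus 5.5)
Your proposal is correct and follows essentially the paper's route. The appendix only records $\mathbf{1}\bar{x}=\Pi_R\mathbf{X}$ and $n\pi_C\bar{v}=\Pi_C\mathbf{V}$ and then invokes the standard contraction result cited from Song et al., whose proof is exactly your argument ($\varrho(\mathbf{R}-\Pi_R)<1$ followed by the Horn--Johnson rescaling giving $\|\mathbf{S}_R(\mathbf{R}-\Pi_R)\mathbf{S}_R^{-1}\|_2\le\rho_R$), so you have supplied the details the paper outsources.

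One small correction: Perron--Frobenius does not give primitivity, and Assumption~\ref{ass:graph} guarantees only spanning trees, so it does not strictly imply primitivity either. Your proof does not need it: it only requires that $1$ is a simple eigenvalue with all other eigenvalues strictly inside the unit disk, which follows from the spanning-tree condition plus the positive diagonal. You also never use positivity of $\pi_R$, only $\pi_R^\top\mathbf{R}=\pi_R^\top$ and $\pi_R^\top\mathbf{1}=1$.
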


\begin{proof}
See Appendix \ref{app:proof_lemma1}
\end{proof}

\begin{lemma}[Conservation Property]
\label{lem:conservation}
Under Algorithm 1, initialized with $v_{i,0} = m_{i,0}$, the column-stochasticity of $\mathbf{C}$ ensures that the average tracker effectively tracks the average momentum at every step
\begin{equation}
    \bar{v}_k = \bar{m}_k, \quad \forall k \ge 0.
\end{equation}
\end{lemma}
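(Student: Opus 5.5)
The argument is an induction on $k$, driven by the matrix form of the tracker update \eqref{eq:matrix_v} and the column-stochasticity of $\mathbf{C}$ from Assumption~\ref{ass:graph}. The only property of $\mathbf{C}$ needed is $\mathbf{1}^\top \mathbf{C} = \mathbf{1}^\top$. This is exactly what column-stochasticity means: each column of $\mathbf{C}$ sums to one.

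\textbf{Base case.} By hypothesis $v_{i,0} = m_{i,0}$ for every agent, so $\mathbf{V}_0 = \mathbf{M}_0$. Hence $\bar{v}_0 = \frac{1}{n}\mathbf{1}^\top\mathbf{V}_0 = \frac{1}{n}\mathbf{1}^\top\mathbf{M}_0 = \bar{m}_0$. In Algorithm~\ref{alg:smtpp} both quantities are zero at initialization, so the hypothesis holds there.

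\textbf{Inductive step.} Suppose $\bar{v}_k = \bar{m}_k$. Left-multiply \eqref{eq:matrix_v} by $\frac{1}{n}\mathbf{1}^\top$ to obtain
\begin{equation*}
\bar{v}_{k+1} = \tfrac{1}{n}\mathbf{1}^\top\mathbf{C}\mathbf{V}_k + \tfrac{1}{n}\mathbf{1}^\top\mathbf{M}_{k+1} - \tfrac{1}{n}\mathbf{1}^\top\mathbf{M}_k = \bar{v}_k + \bar{m}_{k+1} - \bar{m}_k .
\end{equation*}
The second equality uses $\mathbf{1}^\top\mathbf{C} = \mathbf{1}^\top$. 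Substituting the induction hypothesis gives $\bar{v}_{k+1} = \bar{m}_{k+1}$.

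Equivalently, one can telescope: the quantity $\bar{v}_k - \bar{m}_k$ is invariant in $k$, so it equals its initial value, which is zero. The argument uses no property of the momentum recursion \eqref{eq:matrix_m}, of the stochastic gradients, or of $\mathbf{R}$, so it holds sample-path-wise and without expectations.

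There is no genuine obstacle; the result is a direct algebraic consequence of column-stochasticity. The one point to be careful about is that the average here is the \emph{uniform} one, $\frac{1}{n}\mathbf{1}^\top$. This is the average preserved by a column-stochastic matrix, in contrast to the $\pi_R$-weighted average used for $\bar{x}_k$.
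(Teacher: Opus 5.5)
Your proof is correct and matches the paper's: left-multiplying the tracker update by $\frac{1}{n}\mathbf{1}^\top$ and using $\mathbf{1}^\top\mathbf{C} = \mathbf{1}^\top$ gives $\bar{v}_{k+1}-\bar{v}_k = \bar{m}_{k+1}-\bar{m}_k$, which the paper telescopes from the zero initialization just as your induction does. Your base case relies only on the stated hypothesis $\mathbf{V}_0 = \mathbf{M}_0$ rather than on both being zero, so it fits the lemma as written slightly more closely.
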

\begin{proof}
    Left-multiplying the tracker update $\mathbf{V}_{k+1} = \mathbf{C}\mathbf{V}_k + \mathbf{M}_{k+1} - \mathbf{M}_k$ by $\frac{1}{n}\mathbf{1}^\top$, since $\mathbf{C}$ is column stochastic, we obtain the network average dynamics $\bar{v}_{k+1} - \bar{v}_k = \bar{m}_{k+1} - \bar{m}_k$. The sum of $t=0$ to $k$ together with zero initialization ($\bar{v}_0 = \bar{m}_0 = 0$) inherently guaranties that $\bar{v}_{k} = \bar{m}_{k}$ is valid for all $k \ge 0$. 
\end{proof}

Based on the smoothness of the objective function and the conservation property, we derive the descent inequality.

\begin{lemma} [Descent Inequality]
\label{lem:descent inequality}
Under Assumptions 1 and 2, define the virtual sequence $\tilde{z}_k := \bar{x}_k - \frac{\eta c_\pi (1-\lambda)}{\lambda} \bar{m}_{k-1}$. For $\eta \le \frac{1}{2 L c_\pi}$, the expected objective value evolves as
\begin{align}
\mathbb{E}[F(\tilde{z}_{k+1})] \le &\, \mathbb{E}[F(\tilde{z}_k)] - \frac{\eta c_\pi}{8} \mathbb{E}\|\nabla F(\bar{x}_k)\|^2 \notag \\
& + \frac{L^2 \eta c_\pi}{n} \mathcal{E}_{x,k} + c_z \eta \mathcal{E}_{v,k} \notag \\
& + \frac{5L^2 c_\pi^3 \eta^3}{4\lambda^2} \mathbb{E}\|\bar{m}_{k-1}\|^2 + \frac{L c_\pi^2 \eta^2 \sigma^2}{n}.
\end{align}
where $c_\pi = n \pi_R^\top \pi_C > 0$ is the tracking projection constant, and $c_z$ accounts for the projection mismatch between the row-stochastic and column-stochastic graphs.
\end{lemma}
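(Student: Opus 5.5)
We compute the one-step dynamics of $\bar{x}_k$, show that $\tilde{z}_k$ moves along the averaged momentum plus errors, and apply $L$-smoothness of $F$ at $\tilde{z}_k$.

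\textbf{Step 1: average dynamics.} Left-multiplying \eqref{eq:matrix_x} by $\pi_R^\top$ and using $\pi_R^\top\mathbf{R}=\pi_R^\top$ gives $\bar{x}_{k+1}=\bar{x}_k-\eta\pi_R^\top\mathbf{V}_k$. We split
\begin{equation*}
\pi_R^\top\mathbf{V}_k=\pi_R^\top\Pi_C\mathbf{V}_k+\pi_R^\top(\mathbf{V}_k-\Pi_C\mathbf{V}_k).
\end{equation*}
Since $\pi_R^\top\Pi_C\mathbf{V}_k=(\pi_R^\top\pi_C)\mathbf{1}^\top\mathbf{V}_k=c_\pi\bar{v}_k$, Lemma~\ref{lem:conservation} yields
\begin{equation*}
\bar{x}_{k+1}=\bar{x}_k-\eta c_\pi\bar{m}_k-\eta e_k,\qquad e_k:=\pi_R^\top(\mathbf{V}_k-\Pi_C\mathbf{V}_k),
\end{equation*}
with $\|e_k\|^2\le\|\pi_R\|^2\|\mathbf{V}_k-\Pi_C\mathbf{V}_k\|_F^2\le\|\pi_R\|^2\|\mathbf{V}_k-\Pi_C\mathbf{V}_k\|_C^2$.

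\textbf{Step 2: virtual sequence.} Averaging \eqref{eq:matrix_m} gives $\bar{m}_k=(1-\lambda)\bar{m}_{k-1}+\lambda\bar{g}_k$. Then
\begin{equation*}
\tilde{z}_{k+1}-\tilde{z}_k=-\eta c_\pi\bar{m}_k-\eta e_k-\tfrac{\eta c_\pi(1-\lambda)}{\lambda}(\bar{m}_k-\bar{m}_{k-1}).
\end{equation*}
Substituting $\bar{m}_k-\bar{m}_{k-1}=\lambda(\bar{g}_k-\bar{m}_{k-1})$ and collecting terms gives
\begin{equation*}
\tilde{z}_{k+1}=\tilde{z}_k-\eta c_\pi\bar{g}_k-\eta e_k,
\end{equation*}
which is the standard SGD-with-momentum virtual iterate identity. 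Here $\bar{g}_k=\frac1n\sum_i g_{i,k}$, with conditional mean $\frac1n\sum_i\nabla f_i(x_{i,k})$ and conditional variance at most $\sigma^2/n$. (For $k=0$ we set $\bar{m}_{-1}=0$; the convention at $k=0$ should be checked, since $\bar{m}_0=0$ rather than $\lambda\bar{g}_0$.)

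\textbf{Step 3: smoothness.} $L$-smoothness gives
\begin{align*}
F(\tilde{z}_{k+1})\le{}&F(\tilde{z}_k)-\eta c_\pi\langle\nabla F(\tilde{z}_k),\bar{g}_k\rangle-\eta\langle\nabla F(\tilde{z}_k),e_k\rangle\\
&+L\eta^2c_\pi^2\|\bar{g}_k\|^2+L\eta^2\|e_k\|^2.
\end{align*}
Taking conditional expectation turns $\bar{g}_k$ into $\bar{\nabla}_k:=\frac1n\sum_i\nabla f_i(x_{i,k})$, and the second moment of $\bar{g}_k$ contributes $\|\bar{\nabla}_k\|^2+\sigma^2/n$; the variance piece gives the $Lc_\pi^2\eta^2\sigma^2/n$ term. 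We decompose
\begin{equation*}
\nabla F(\tilde{z}_k)=\nabla F(\bar{x}_k)+\big(\nabla F(\tilde{z}_k)-\nabla F(\bar{x}_k)\big),\qquad \bar{\nabla}_k=\nabla F(\bar{x}_k)+\big(\bar{\nabla}_k-\nabla F(\bar{x}_k)\big).
\end{equation*}
The cross term $-\eta c_\pi\langle\nabla F(\bar{x}_k),\nabla F(\bar{x}_k)\rangle$ supplies the main descent. Each perturbation is handled with Young's inequality at weight $1/4$ or $1/8$:
\begin{itemize}
\item $\|\bar{\nabla}_k-\nabla F(\bar{x}_k)\|^2\le\frac{L^2}{n}\|\mathbf{X}_k-\Pi_R\mathbf{X}_k\|_F^2\le\frac{L^2}{n}\|\mathbf{X}_k-\Pi_R\mathbf{X}_k\|_R^2$, giving the $\mathcal{E}_{x,k}$ term;
\item $\|\nabla F(\tilde{z}_k)-\nabla F(\bar{x}_k)\|\le L\|\tilde{z}_k-\bar{x}_k\|=L\eta c_\pi\frac{1-\lambda}{\lambda}\|\bar{m}_{k-1}\|$, giving the $\eta^3\|\bar{m}_{k-1}\|^2/\lambda^2$ term;
\item the $e_k$ inner product, bounded by $\frac{\eta c_\pi}{8}\|\nabla F\|^2+\frac{2\eta}{c_\pi}\|e_k\|^2$, which is absorbed into $c_z\eta\mathcal{E}_{v,k}$ with $c_z$ depending on $\|\pi_R\|^2$ and $c_\pi$.
\end{itemize}
The condition $\eta\le 1/(2Lc_\pi)$ makes $L\eta^2c_\pi^2\|\bar{\nabla}_k\|^2\le\frac{\eta c_\pi}{2}\|\bar{\nabla}_k\|^2$, so it can be absorbed. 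The net coefficient of $\|\nabla F(\bar{x}_k)\|^2$ is then $-\eta c_\pi/8$.

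\textbf{Main obstacle.} The hard part is the constant bookkeeping in Step 3: absorbing all Young's-inequality pieces while keeping exactly $-\eta c_\pi/8$, and obtaining the stated coefficients $L^2\eta c_\pi/n$ and $5L^2c_\pi^3\eta^3/(4\lambda^2)$. In particular the $L\eta^2\|e_k\|^2$ term must merge into $c_z\eta$, which uses $\eta\le 1/(2Lc_\pi)$. The non-orthogonality of $\pi_R$ and $\mathbf{1}/n$ means errors cannot be cancelled exactly, so all of this mismatch is pushed into $c_z$. If the constants do not close, I would redo the split of $\bar{g}_k$ directly around $\nabla F(\tilde{z}_k)$ instead.
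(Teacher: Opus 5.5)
Your Steps 1--2 agree with the paper, and with the lemma's indexing your identity $\tilde z_{k+1}=\tilde z_k-\eta c_\pi\bar g_k-\eta e_k$ is cleaner than the appendix's version, which mixes $k$ and $k+1$. The gap is in Step 3. Write $g:=\nabla F(\bar x_k)$, $a:=\nabla F(\tilde z_k)-g$ and $b:=\bar{\nabla}_k-g$. When you center at $g$, there is no negative $\|\bar{\nabla}_k\|^2$ term. So the claim that $L\eta^2c_\pi^2\|\bar{\nabla}_k\|^2\le\frac{\eta c_\pi}{2}\|\bar{\nabla}_k\|^2$ ``can be absorbed'' has nothing to absorb it, and it must be paid out of $-\eta c_\pi\|g\|^2$.

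Suppose, as you propose, you bound $\frac{\eta c_\pi}{2}\|g+b\|^2$ and the cross term $-\eta c_\pi\langle g,b\rangle$ separately by Young's inequality, with parameters $\alpha,\beta$. Then the coefficient of $\|g\|^2$ is at least $\eta c_\pi\bigl(-\frac12+\frac{\alpha+\beta}{2}\bigr)$, and that of $\|b\|^2$ is at least $\frac{\eta c_\pi}{2}\bigl(1+\frac1\alpha+\frac1\beta\bigr)$. Reaching $-\frac{\eta c_\pi}{8}$ requires $\alpha+\beta\le\frac34$, which forces $\frac1\alpha+\frac1\beta\ge\frac{16}{3}$. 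The resulting $\mathcal{E}_{x,k}$ coefficient is then at least $\frac{19}{6}\cdot\frac{L^2\eta c_\pi}{n}$, more than three times the stated one. So the lemma, with its stated constants, does not follow from the bookkeeping you describe.

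The missing device is the one the paper uses, and the one your fallback points to: the polarization identity taken around $\nabla F(\tilde z_k)$,
\[
-\langle\nabla F(\tilde z_k),\bar{\nabla}_k\rangle=\tfrac12\|\nabla F(\tilde z_k)-\bar{\nabla}_k\|^2-\tfrac12\|\nabla F(\tilde z_k)\|^2-\tfrac12\|\bar{\nabla}_k\|^2 .
\]
With this identity the constants close as follows.
\begin{itemize}
\item The term $-\frac{\eta c_\pi}{2}\|\bar{\nabla}_k\|^2$ genuinely absorbs $L\eta^2c_\pi^2\|\bar{\nabla}_k\|^2$ when $\eta\le 1/(2Lc_\pi)$.
\item The mismatch term satisfies $\frac{\eta c_\pi}{2}\|\nabla F(\tilde z_k)-\bar{\nabla}_k\|^2\le\frac{L^2\eta c_\pi}{n}\mathcal{E}_{x,k}+\frac{L^2c_\pi^3\eta^3}{\lambda^2}\mathbb{E}\|\bar m_{k-1}\|^2$.
\item The $e_k$ product is charged to $\|\nabla F(\tilde z_k)\|^2$, not to $\|g\|^2$, at cost $\frac{\eta c_\pi}{4}\|\nabla F(\tilde z_k)\|^2+\frac{\eta}{c_\pi}\|e_k\|^2$.
\item Only at the end do you convert, via $-\frac{\eta c_\pi}{4}\|\nabla F(\tilde z_k)\|^2\le-\frac{\eta c_\pi}{8}\|g\|^2+\frac{L^2c_\pi^3\eta^3}{4\lambda^2}\|\bar m_{k-1}\|^2$. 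This step is what produces the factor $\frac54$.
\end{itemize}
Equivalently, in your centering you must expand $\frac{\eta c_\pi}{2}\|g+b\|^2$ exactly, so that the two $\langle g,b\rangle$ terms cancel rather than being bounded separately.

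Your $k=0$ caveat is well founded. With $\mathbf{M}_0=\mathbf{V}_0=\mathbf{0}$ and $\bar m_{-1}=0$, one gets $\tilde z_1=\tilde z_0=x_0$, and every error term on the right vanishes. The inequality would then require $\frac{\eta c_\pi}{8}\|\nabla F(x_0)\|^2\le Lc_\pi^2\eta^2\sigma^2/n$, which fails in general. The statement should therefore be read for $k\ge1$, or under the initialization $m_{i,0}=v_{i,0}=\lambda g_{i,0}$.
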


\begin{proof}
See Appendix \ref{app:proof_lemma3}
\end{proof}
\begin{remark}
The virtual sequence $\tilde{z}_k$ is carefully constructed to mimic a scaled SGD step, which isolates the true stochastic gradient while explicitly capturing the topology-induced tracking mismatch for the subsequent Lyapunov analysis.
\end{remark}

\begin{lemma}[Consensus Error Bound]
\label{lem:consensus}
Under Assumption 1, for any $\eta > 0$, the consensus error satisfies
\begin{equation}
    \mathcal{E}_{x,k+1} \le \frac{1 + \rho_R^2}{2} \mathcal{E}_{x,k} + c_{x,1} \eta^2 \mathcal{E}_{v,k} + c_{x,2} \eta^2 \mathbb{E}\|\bar{m}_k\|^2, 
\end{equation}
where $\mathcal{E}_{v,k} := \mathbb{E}\|\mathbf{V}_k - \Pi_C \mathbf{V}_k\|_C^2$ is the tracking error, $\rho_R \in (0,1)$ is the contraction factor of matrix $\mathbf{R}$, and $c_{x,1}, c_{x,2}$ are positive constants depending on the network topology.
\end{lemma}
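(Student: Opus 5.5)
We start from the update \eqref{eq:matrix_x}, $\mathbf{X}_{k+1} = \mathbf{R}(\mathbf{X}_k - \eta \mathbf{V}_k)$, and use two facts: $\pi_R^\top \mathbf{R} = \pi_R^\top$ gives $\Pi_R \mathbf{R} = \Pi_R$, and $\mathbf{R}\Pi_R = \Pi_R$ (row-stochasticity). Together these give
\begin{equation*}
\mathbf{X}_{k+1} - \Pi_R \mathbf{X}_{k+1} = (\mathbf{R} - \Pi_R)(\mathbf{X}_k - \eta\mathbf{V}_k) = (\mathbf{R}-\Pi_R)(\mathbf{I}-\Pi_R)(\mathbf{X}_k - \eta \mathbf{V}_k).
\end{equation*}
Lemma \ref{lem:mixing} applied to the matrix $(\mathbf{I}-\Pi_R)(\mathbf{X}_k - \eta\mathbf{V}_k)$ then yields
\begin{equation*}
\|\mathbf{X}_{k+1} - \Pi_R\mathbf{X}_{k+1}\|_R \le \rho_R \bigl(\|\mathbf{X}_k - \Pi_R\mathbf{X}_k\|_R + \eta \|(\mathbf{I}-\Pi_R)\mathbf{V}_k\|_R\bigr).
\end{equation*}
We square this and apply Young's inequality $(a+b)^2 \le (1+\epsilon)a^2 + (1+1/\epsilon)b^2$ with $\epsilon$ chosen so that $\rho_R^2(1+\epsilon) = \frac{1+\rho_R^2}{2}$, i.e.\ $\epsilon = \frac{1-\rho_R^2}{2\rho_R^2}$. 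This produces the contraction factor $\frac{1+\rho_R^2}{2}$ on $\mathcal{E}_{x,k}$, plus a remainder $\rho_R^2(1+1/\epsilon)\eta^2 \|(\mathbf{I}-\Pi_R)\mathbf{V}_k\|_R^2 = \frac{\rho_R^2(1+\rho_R^2)}{1-\rho_R^2}\eta^2\|(\mathbf{I}-\Pi_R)\mathbf{V}_k\|_R^2$.

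The remaining work is to bound $\|(\mathbf{I}-\Pi_R)\mathbf{V}_k\|_R^2$ by the tracking error in the $C$-norm and the average momentum. We decompose
\begin{equation*}
\mathbf{V}_k = (\mathbf{V}_k - \Pi_C\mathbf{V}_k) + \Pi_C \mathbf{V}_k, \qquad \Pi_C\mathbf{V}_k = \pi_C \mathbf{1}^\top \mathbf{V}_k = n\,\pi_C \bar{v}_k = n\,\pi_C \bar{m}_k,
\end{equation*}
where the last equality is Lemma \ref{lem:conservation}. Using $\|a+b\|^2\le 2\|a\|^2+2\|b\|^2$, norm equivalence ($\|\cdot\|_R \le \delta_R\|\cdot\|_F$, $\|\cdot\|_F\le\|\cdot\|_C$), and the bounded operator norm of $\mathbf{I}-\Pi_R$ (say $\|\mathbf{I}-\Pi_R\|_2$), we obtain
\begin{equation*}
\|(\mathbf{I}-\Pi_R)\mathbf{V}_k\|_R^2 \le 2\delta_R^2\|\mathbf{I}-\Pi_R\|_2^2\,\|\mathbf{V}_k-\Pi_C\mathbf{V}_k\|_C^2 + 2\delta_R^2 n^2 \|(\mathbf{I}-\Pi_R)\pi_C\|_2^2\,\|\bar{m}_k\|^2 .
\end{equation*}
Taking expectations then gives the claim with $c_{x,1} = \frac{2\rho_R^2(1+\rho_R^2)\delta_R^2\|\mathbf{I}-\Pi_R\|_2^2}{1-\rho_R^2}$ and an analogous $c_{x,2}$; both depend only on the topology. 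The inequality holds for any $\eta>0$ because no step-size condition is used.

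The main subtlety is the $\bar{m}_k$ term. The vector $\pi_C$ is generally not parallel to $\mathbf{1}$, so $(\mathbf{I}-\Pi_R)\pi_C \neq 0$, and this term cannot be absorbed. It represents the topology mismatch, and it is precisely why $\|\bar{m}_k\|^2$ appears explicitly in the bound and is carried into the Lyapunov analysis together with the $\eta^3\|\bar{m}_{k-1}\|^2$ term of Lemma \ref{lem:descent inequality}. Beyond this, one only needs to check that the column-wise definition of the induced norms lets Lemma \ref{lem:mixing} and the operator-norm bounds act column by column, which is immediate.
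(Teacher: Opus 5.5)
Your proposal is correct and follows essentially the same route as the paper. Both use the identity $\mathbf{X}_{k+1}-\Pi_R\mathbf{X}_{k+1}=(\mathbf{R}-\Pi_R)(\mathbf{X}_k-\eta\mathbf{V}_k)$, Young's inequality with $\alpha=\frac{1-\rho_R^2}{2\rho_R^2}$ (giving $\tilde c_1=\rho_R^2\frac{1+\rho_R^2}{1-\rho_R^2}$), and the split $\mathbf{V}_k=(\mathbf{V}_k-\Pi_C\mathbf{V}_k)+\Pi_C\mathbf{V}_k$ with $\Pi_C\mathbf{V}_k$ rewritten through Lemma~\ref{lem:conservation}. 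The only difference is that you keep the factor $(\mathbf{I}-\Pi_R)$ in front of $\mathbf{V}_k$, paying $\|\mathbf{I}-\Pi_R\|_2^2$ and $\|(\mathbf{I}-\Pi_R)\pi_C\|_2^2$ in the constants; the paper instead replaces $\|\mathbf{R}-\Pi_R\|_R^2$ by $\rho_R^2$ against $\|\mathbf{V}_k\|_R^2$, which implicitly requires $\|\mathbf{I}-\Pi_R\|_R\le 1$, so your version is slightly more careful and uses only what Lemma~\ref{lem:mixing} actually states.
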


\begin{proof}
Subtracting the consensus projection $\Pi_R \mathbf{X}_{k+1}$ from the update rule and applying Young's inequality with the strict geometric contraction of $\mathbf{R}$ yields a bound dependent on the tracker $\|\mathbf{V}_k\|_R^2$. Crucially, we then decompose $\mathbf{V}_k$ into the tracking error and the momentum drift . Substituting these bounded components directly establishes the recursive inequality. The detailed proof see Appendix \ref{app:proof_lemma4}
\end{proof}

The tracking error analysis in SMTPP differs fundamentally from standard Push-Pull methods. Since $\mathbf{V}_k$ tracks the momentum $\mathbf{M}_k$, the error is driven by the change in local momentum.

\begin{lemma}[Momentum Tracking Error Bound]
\label{lem:tracking}
Under Assumptions 1-3, using the weighted norm $\|\cdot\|_C$ defined in Lemma \ref{ass:graph}, the tracking error $\mathcal{E}_{v,k} := \mathbb{E}\|\mathbf{V}_k - \Pi_C \mathbf{V}_k\|_C^2$ satisfies the recursive bound
\begin{equation}
\begin{split}
    \mathcal{E}_{v,k+1} &\le \frac{1 + \rho_C^2}{2} \mathcal{E}_{v,k} + C_v \lambda^2 \mathcal{E}_{x,k} \\
    &\quad + C_v \lambda^2 \left( \sigma^2 + \mathbb{E}\|\nabla F(\bar{x}_k)\|^2 \right), \label{eq:tracking_bound}
\end{split}
\end{equation}
where $C_v$ is a positive constant that depends on the norm equivalence factor $\delta_C$ and the Lipschitz constant $L$.
\end{lemma}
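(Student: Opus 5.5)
We start from the tracker recursion \eqref{eq:matrix_v}. Since $\mathbf{C}$ is column-stochastic and $\mathbf{C}\pi_C=\pi_C$, we have $\Pi_C\mathbf{C}=\Pi_C$ and $\mathbf{C}\Pi_C=\Pi_C$. Subtracting the projection therefore gives
\begin{equation*}
(\mathbf{I}-\Pi_C)\mathbf{V}_{k+1} = (\mathbf{C}-\Pi_C)(\mathbf{I}-\Pi_C)\mathbf{V}_k + (\mathbf{I}-\Pi_C)(\mathbf{M}_{k+1}-\mathbf{M}_k).
\end{equation*}
We take $\|\cdot\|_C^2$ and apply Young's inequality $\|a+b\|^2\le(1+\alpha)\|a\|^2+(1+1/\alpha)\|b\|^2$ with $\alpha=\frac{1-\rho_C^2}{2\rho_C^2}$. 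Combined with Lemma \ref{lem:mixing}, this makes the first term at most $\frac{1+\rho_C^2}{2}\mathcal{E}_{v,k}$. The second term is bounded by $\frac{1+\rho_C^2}{1-\rho_C^2}\,\delta_C^2\|\mathbf{I}-\Pi_C\|_2^2\,\mathbb{E}\|\mathbf{M}_{k+1}-\mathbf{M}_k\|_F^2$, using the norm equivalence $\|\cdot\|_C\le\delta_C\|\cdot\|_F$. What remains is to bound the momentum increment.

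By \eqref{eq:matrix_m}, $\mathbf{M}_{k+1}-\mathbf{M}_k=\lambda(\mathbf{G}_{k+1}-\mathbf{M}_k)$, which already produces the factor $\lambda^2$. We then split
\begin{equation*}
\mathbf{G}_{k+1}-\mathbf{M}_k = (\mathbf{G}_{k+1}-\nabla\mathbf{F}(\mathbf{X}_{k+1})) + \nabla\mathbf{F}(\mathbf{X}_{k+1}) - \mathbf{M}_k .
\end{equation*}
Conditioned on the history, the first piece has zero mean and second moment at most $n\sigma^2$ by Assumption \ref{ass:noise}. For the rest we write $\nabla\mathbf{F}(\mathbf{X}_{k+1})=[\nabla\mathbf{F}(\mathbf{X}_{k+1})-\nabla\mathbf{F}(\mathbf{1}\bar{x}_k)]+\nabla\mathbf{F}(\mathbf{1}\bar{x}_k)$. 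The first bracket is controlled by $L^2\|\mathbf{X}_{k+1}-\mathbf{1}\bar{x}_k\|_F^2$, which by \eqref{eq:matrix_x} is at most $2L^2(\|\mathbf{X}_k-\Pi_R\mathbf{X}_k\|^2+\eta^2\|\mathbf{R}\|^2\|\mathbf{V}_k\|^2)$; the first part gives the $\mathcal{E}_{x,k}$ term. The term $\|\nabla\mathbf{F}(\mathbf{1}\bar{x}_k)\|_F^2$ has to be related to $n\|\nabla F(\bar{x}_k)\|^2$.

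The main obstacle is that the statement contains neither a data-heterogeneity term nor $\mathcal{E}_{m,k}$ nor an $\eta^2\|\mathbf{V}_k\|^2$ term. Moreover, $\|\mathbf{M}_k\|_F$ is not obviously bounded by these quantities, and $\sum_i\|\nabla f_i(\bar x_k)\|^2$ is not bounded by $n\|\nabla F(\bar x_k)\|^2$ without a heterogeneity assumption. I will handle these pieces as follows.

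First, for the $\eta^2\|\mathbf{V}_k\|^2$ contribution, we decompose $\mathbf{V}_k=(\mathbf{I}-\Pi_C)\mathbf{V}_k+n\pi_C\bar{m}_k$ via Lemma \ref{lem:conservation}. Under a small-step condition, $\eta^2 L^2\lambda^2$ times this is absorbed into a slightly enlarged contraction factor for $\mathcal{E}_{v,k}$. This would keep the coefficient $\frac{1+\rho_C^2}{2}$ if we start instead from a contraction strictly better than $\frac{1+\rho_C^2}{2}$, e.g.\ by choosing $\alpha$ so the first term is $\frac{1+3\rho_C^2}{4}$ and using the gap. The $\bar m_k$ part then joins the $\nabla F$ and $\sigma^2$ terms.

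Second, for $\mathbf{M}_k$ I would use the unrolled momentum $\mathbf{M}_k=\lambda\sum_{t\le k}(1-\lambda)^{k-t}\mathbf{G}_t$, whose second moment is a convex combination of gradient magnitudes plus variance. I expect that, as in the authors' treatment, these are ultimately collected into the constant $C_v$ together with $\mathbb{E}\|\nabla F(\bar{x}_k)\|^2$, $\sigma^2$, and $\mathcal{E}_{x,k}$. If this collection requires it, I would invoke an implicit bounded-gradient or heterogeneity-type constant folded into $C_v$. This is the step I am least confident closes cleanly. Finally, we take total expectation and merge all constants (depending on $\delta_C$, $\|\mathbf{I}-\Pi_C\|$, $\|\mathbf{R}\|$, $L$, $n$) into $C_v$.
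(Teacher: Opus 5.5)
Your first half is essentially the paper's argument: the splitting $(\mathbf{I}-\Pi_C)\mathbf{V}_{k+1}=(\mathbf{C}-\Pi_C)(\mathbf{I}-\Pi_C)\mathbf{V}_k+(\mathbf{I}-\Pi_C)(\mathbf{M}_{k+1}-\mathbf{M}_k)$, Young's inequality with $\alpha=\frac{1-\rho_C^2}{2\rho_C^2}$, and the perturbation bound $\delta_C^2\|\mathbf{I}-\Pi_C\|_2^2\lambda^2\,\mathbb{E}\|\mathbf{G}_{k+1}-\mathbf{M}_k\|_F^2$. Your trick of starting from the factor $\frac{1+3\rho_C^2}{4}$, so that an $\eta^2\lambda^2\mathcal{E}_{v,k}$ term can be absorbed, also works, but only under a step-size condition that the statement does not contain. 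The paper does not do this and ends with the enlarged factor $\frac{1+\rho_C^2}{2}+12C_{\mathrm{tr}}L^2\|\mathbf{R}\|^2\eta^2\lambda^2$.

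The genuine gap is the step you flagged: removing $\mathbf{M}_k$ and the heterogeneity from $\mathbb{E}\|\mathbf{G}_{k+1}-\mathbf{M}_k\|_F^2$ using only $\mathcal{E}_{x,k}$, $\sigma^2$ and $\mathbb{E}\|\nabla F(\bar x_k)\|^2$. None of your devices closes it.
\begin{itemize}
\item Centring at $\nabla\mathbf{F}(\mathbf{1}\bar x_k)$ produces $\sum_i\|\nabla f_i(\bar x_k)\|^2$, which Assumptions 1--3 do not bound by $n\|\nabla F(\bar x_k)\|^2$. Keeping the projection does not help, since $(\mathbf{I}-\Pi_C)\nabla\mathbf{F}(\mathbf{1}\bar x_k)=0$ only if $\nabla f_i(\bar x_k)=n[\pi_C]_i\nabla F(\bar x_k)$ for all $i$.
\item The unrolled $\mathbf{M}_k$, and likewise $\bar m_k$, are averages of gradients at \emph{past} iterates. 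They need not be small when $\nabla F(\bar x_k)=0$ and $\mathcal{E}_{x,k}=0$, so they cannot enter a one-step recursion in time-$k$ quantities.
\item A bounded-heterogeneity or bounded-gradient constant is not among the assumptions; the paper's remark explicitly disclaims it. It would also add a separate $\lambda^2\zeta^2$ term, not something of the form $C_v\lambda^2\sigma^2$ with $C_v$ depending only on $\delta_C$ and $L$.
\end{itemize}

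In fact no argument can close this step, because the displayed inequality is false. Take $\sigma=0$, $f_i(x)=\langle a_i,x\rangle+\frac{L}{2}\|x\|^2$ with $\sum_i a_i=0$ and some $a_i\neq 0$, and $x_0=0$. At $k=0$ we have $\mathcal{E}_{x,0}=\mathcal{E}_{v,0}=0$ and $\nabla F(\bar x_0)=0$, so the right-hand side vanishes. Yet $\mathbf{X}_1=\mathbf{1}x_0^\top$ and $\mathbf{V}_1=\mathbf{M}_1=\lambda A$ with $A=[a_1,\dots,a_n]^\top$. Since $\Pi_C A=\pi_C\mathbf{1}^\top A=0$, this gives $\mathcal{E}_{v,1}=\lambda^2\|A\|_C^2>0$.

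The paper instead \emph{keeps} $\mathbf{M}_k$ as a state rather than eliminating it. It centres the increment at the local true gradients,
\begin{align*}
\mathbf{G}_{k+1}-\mathbf{M}_k &= \bigl(\mathbf{G}_{k+1}-\nabla\mathbf{F}(\mathbf{X}_{k+1})\bigr)\\
&\quad + \bigl(\nabla\mathbf{F}(\mathbf{X}_{k+1})-\nabla\mathbf{F}(\mathbf{X}_k)\bigr)\\
&\quad + \bigl(\nabla\mathbf{F}(\mathbf{X}_k)-\mathbf{M}_k\bigr),
\end{align*}
so heterogeneity never appears. The last piece is exactly $\mathcal{E}_{m,k}$, which has its own recursion in Lemma~\ref{lem:momentum}. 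The middle piece is bounded through $\mathbb{E}\|\mathbf{X}_{k+1}-\mathbf{X}_k\|_F^2$ by $\mathcal{E}_{x,k}$, $\eta^2\mathcal{E}_{v,k}$ and $\eta^2\mathbb{E}\|\bar m_k\|^2$. The resulting recursion carries $3C_{\mathrm{tr}}\lambda^2\mathcal{E}_{m,k}$ and an $\eta^2\lambda^2\mathbb{E}\|\bar m_k\|^2$ term on the right-hand side.

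So the paper's appendix does not literally establish the displayed statement either. What it proves is the coupled form that Lemma~\ref{lem:linear_system} actually uses, namely the $\lambda^2$-weighted $\mathcal{E}_{m,k}$ entry in the second row of $J_{\eta,\lambda}$. To repair your proof, aim for that coupled form rather than trying to fold $\mathbf{M}_k$ into $C_v$.
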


\begin{proof}
See Appendix \ref{app:proof_lemma5}
\end{proof}

\begin{remark}
Unlike standard Gradient Tracking where the error is driven by $\eta^2 \|\nabla F_{k+1} - \nabla F_k\|^2$, in SMTPP, the tracking error is driven by the change in momentum $\|\mathbf{M}_{k+1} - \mathbf{M}_k\|^2 \propto \lambda^2$. The structure allows the algorithm to tolerate larger step sizes on sparse graphs by tuning $\lambda$.
\end{remark}

Finally, we analyze the momentum approximation error, which arises from the stochastic gradient noise and the drift of the model parameters.

\begin{lemma}[Momentum Approximation Error]
\label{lem:momentum}
Let $\mathcal{E}_{m,k} := \mathbb{E}\|\mathbf{M}_k - \nabla \mathbf{F}(\mathbf{X}_k)\|_F^2$, where $\nabla \mathbf{F}(\mathbf{X}_k) = [\nabla f_1(x_{1,k}), \dots, \nabla f_n(x_{n,k})]^\top \in \mathbb{R}^{n \times d}$ is the full gradient matrix. Under Assumptions 1--3, the momentum approximation error evolves as
\begin{equation}
    \begin{split}
    \mathcal{E}_{m,k+1} &\le (1 - \lambda)\mathcal{E}_{m,k} + \frac{C_{m,1}}{\lambda} \mathcal{E}_{x,k} + \frac{C_{m,2} \eta^2}{\lambda} \mathcal{E}_{v,k} \\
    &\quad + \frac{C_{m,3} n \eta^2}{\lambda} \mathbb{E}\|\bar{m}_k\|^2 + \lambda^2 n \sigma^2,
    \end{split}
\end{equation}
where $C_{m,1} = 2(1-\lambda)^2 L^2 \|\mathbf{R}-\mathbf{I}\|^2$, $C_{m,2} = 4(1-\lambda)^2 L^2 \|\mathbf{R}\|^2$, and $C_{m,3} = 4(1-\lambda)^2 L^2 \|\mathbf{R}\|^2$ are constants depending on the graph topology and the smoothness parameter $L$.
\end{lemma}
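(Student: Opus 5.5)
We start from the momentum recursion \eqref{eq:matrix_m} and subtract $\nabla\mathbf{F}(\mathbf{X}_{k+1})$, which gives the decomposition
\begin{equation*}
\mathbf{M}_{k+1}-\nabla\mathbf{F}(\mathbf{X}_{k+1}) = (1-\lambda)\big(\mathbf{M}_k-\nabla\mathbf{F}(\mathbf{X}_k)\big) + (1-\lambda)\big(\nabla\mathbf{F}(\mathbf{X}_k)-\nabla\mathbf{F}(\mathbf{X}_{k+1})\big) + \lambda\big(\mathbf{G}_{k+1}-\nabla\mathbf{F}(\mathbf{X}_{k+1})\big).
\end{equation*}
Let $\mathcal{F}_{k+1}$ be the sigma-field generated by everything up to the sampling of $\mathbf{X}_{k+1}$. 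The first two terms are $\mathcal{F}_{k+1}$-measurable. The last term has zero conditional mean by Assumption \ref{ass:noise}, so the cross terms vanish when we take expectations. The noise term then contributes at most $\lambda^2 n\sigma^2$, since we sum the per-agent variance bound over $n$ agents. This reduces the problem to bounding $\mathbb{E}\|(1-\lambda)(A+B)\|_F^2$, where $A$ is the old error and $B$ is the gradient drift.

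For the deterministic part, we apply Young's inequality in the form $\|A+B\|^2\le(1+\lambda)\|A\|^2+(1+1/\lambda)\|B\|^2$. Using $(1-\lambda)^2(1+\lambda)\le 1-\lambda$, the coefficient on $\mathcal{E}_{m,k}$ is at most $1-\lambda$. Using $(1+1/\lambda)\le 2/\lambda$, the drift term is at most $\frac{2(1-\lambda)^2}{\lambda}\mathbb{E}\|\nabla\mathbf{F}(\mathbf{X}_{k+1})-\nabla\mathbf{F}(\mathbf{X}_k)\|_F^2$. Assumption \ref{ass:func}, applied row-wise, bounds this by $\frac{2(1-\lambda)^2L^2}{\lambda}\mathbb{E}\|\mathbf{X}_{k+1}-\mathbf{X}_k\|_F^2$.

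Next we expand the model increment with \eqref{eq:matrix_x}:
\begin{equation*}
\mathbf{X}_{k+1}-\mathbf{X}_k=(\mathbf{R}-\mathbf{I})\mathbf{X}_k-\eta\mathbf{R}\mathbf{V}_k .
\end{equation*}
Two identities simplify this. Since $\mathbf{R}\mathbf{1}=\mathbf{1}$, we have $(\mathbf{R}-\mathbf{I})\Pi_R=0$, so $(\mathbf{R}-\mathbf{I})\mathbf{X}_k=(\mathbf{R}-\mathbf{I})(\mathbf{X}_k-\Pi_R\mathbf{X}_k)$. For the tracker, we write $\mathbf{V}_k=(\mathbf{V}_k-\Pi_C\mathbf{V}_k)+\Pi_C\mathbf{V}_k$. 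Because $\mathbf{1}^\top\mathbf{V}_k=n\bar v_k$, we get $\Pi_C\mathbf{V}_k=n\pi_C\bar v_k$, and Lemma \ref{lem:conservation} turns this into $n\pi_C\bar m_k$. Its squared Frobenius norm is $n^2\|\pi_C\|^2\|\bar m_k\|^2\le n\|\bar m_k\|^2$ up to a harmless constant, since $\|\pi_C\|_2\le 1$; alternatively the constant can be absorbed into $C_{m,3}$.

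We then split the increment with $\|a+b+c\|^2\le 2\|a\|^2+4\|b\|^2+4\|c\|^2$. This yields the terms $2\|\mathbf{R}-\mathbf{I}\|^2\|\mathbf{X}_k-\Pi_R\mathbf{X}_k\|_F^2$, $4\eta^2\|\mathbf{R}\|^2\|\mathbf{V}_k-\Pi_C\mathbf{V}_k\|_F^2$ and $4\eta^2\|\mathbf{R}\|^2 n\|\bar m_k\|^2$. Converting Frobenius norms to $\|\cdot\|_R$ and $\|\cdot\|_C$ via $\|\cdot\|_F\le\|\cdot\|_\nu$ gives $\mathcal{E}_{x,k}$ and $\mathcal{E}_{v,k}$, which produces the stated $C_{m,1},C_{m,2},C_{m,3}$.

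The main obstacle is the bookkeeping of constants. Taken literally, the factor $2/\lambda$ from Young multiplies the inner factors $2$ and $4$, so the constants come out roughly twice as large as stated. I would first try the sharper split $(1+\lambda)/(1+1/\lambda)$ together with $(1-\lambda)^2(1+1/\lambda)=(1-\lambda)^2(1+\lambda)/\lambda\le(1-\lambda)/\lambda$. If the stated constants still do not come out, I would absorb the extra factor into $C_{m,i}$, since only their form matters downstream. A second subtle point is ensuring that $\Pi_C\mathbf{V}_k$ is controlled by $\bar m_k$ exactly through the conservation lemma, which needs $\mathbf{V}_0=\mathbf{M}_0=0$.
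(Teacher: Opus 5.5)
Your route is the paper's route: the same three-term split of $\mathbf{M}_{k+1}-\nabla\mathbf{F}(\mathbf{X}_{k+1})$, the same argument that the noise cross terms vanish, then Young plus $L$-smoothness. The increment is expanded the same way, $\mathbf{X}_{k+1}-\mathbf{X}_k=(\mathbf{R}-\mathbf{I})(\mathbf{X}_k-\Pi_R\mathbf{X}_k)-\eta\mathbf{R}\mathbf{V}_k$, and $\mathbf{V}_k$ is split around $\Pi_C\mathbf{V}_k$ via the conservation lemma. Your $2/4/4$ split is exactly what the paper gets from two successive $2/2$ splits.

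As written, however, you do not reach the stated constants. Absorbing the discrepancy into $C_{m,i}$ is not allowed, because the lemma fixes $C_{m,1},C_{m,2},C_{m,3}$ explicitly. The missing step is the Young parameter:
\begin{itemize}
\item With $\alpha=\lambda$ the drift coefficient is $(1-\lambda)^2(1+\lambda)/\lambda$. That is a factor $1+\lambda$ too large, and a factor $2$ after your bound $1+1/\lambda\le 2/\lambda$.
\item Your ``sharper'' variant $(1-\lambda)/\lambda$ loses a factor $1-\lambda$ instead.
\item Take $\alpha=\lambda/(1-\lambda)$, i.e.\ $\|A+B\|^2\le\frac{1}{1-\lambda}\|A\|^2+\frac{1}{\lambda}\|B\|^2$, as the paper does. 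After multiplying by $(1-\lambda)^2$ this gives exactly $(1-\lambda)\mathcal{E}_{m,k}+\frac{(1-\lambda)^2L^2}{\lambda}\mathbb{E}\|\mathbf{X}_{k+1}-\mathbf{X}_k\|_F^2$.
\end{itemize}
Combined with your increment split, this yields $C_{m,1}$ and $C_{m,2}$ exactly. The case $\lambda=1$ is trivial, since the drift term carries the factor $(1-\lambda)^2=0$.

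The second problem is the $\bar m_k$ term, where your justification is wrong. The bound $\|\pi_C\|_2\le 1$ only gives $n^2\|\pi_C\|_2^2\le n^2$, not $\le n$. In fact Cauchy--Schwarz with $\mathbf{1}^\top\pi_C=1$ gives $n^2\|\pi_C\|_2^2\ge n$, with equality only when $\pi_C=\mathbf{1}/n$. So this route produces $C_{m,3}=4(1-\lambda)^2L^2\|\mathbf{R}\|^2\cdot n\|\pi_C\|_2^2$, not the stated constant.

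More care on your side cannot repair this. The paper's own proof makes the identical silent substitution: it bounds $\mathbb{E}\|\mathbf{V}_k\|_F^2$ with the factor $2n^2\|\pi_C\|_F^2$, then writes $2\eta^2 n\|\mathbf{R}\|^2$ in the final display. What this argument actually proves is the lemma with the extra topology factor $n\|\pi_C\|_2^2\in[1,n]$ inside $C_{m,3}$. That factor is harmless for the later $\mathcal{O}(\cdot)$ analysis, but it should be carried explicitly rather than claimed to be at most $1$.
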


\begin{proof}
See Appendix \ref{app:proof_lemma6}
\end{proof}

\subsection{Stability of the Error System}

The convergence of SMTPP relies on the joint evolution of error components. By combining Lemmas 4--6, we can characterize the error dynamics as a linear time-invariant system.

\begin{lemma}[Linear System of Inequalities]
\label{lem:linear_system}
Let $e_k := [\mathcal{E}_{x,k}, \mathcal{E}_{v,k}, \mathcal{E}_{m,k}, \mathcal{E}_{M,k}]^\top \in \mathbb{R}^4$ be the augmented error state vector, where we explicitly introduce the momentum energy $\mathcal{E}_{M,k} := \mathbb{E}\|\bar{m}_{k-1}\|^2$ as a system state. From the momentum update, its recurrence satisfies $\mathcal{E}_{M,k+1} \le (1-\lambda)\mathcal{E}_{M,k} + \frac{2L^2\lambda}{n}\mathcal{E}_{x,k} + 2\lambda\mathbb{E}\|\nabla F(\bar{x}_k)\|^2 + \frac{\lambda^2\sigma^2}{n}$.
We have
\begin{equation}
    e_{k+1} \le J_{\eta,\lambda} e_k + b\sigma^2 + h \mathbb{E}\|\nabla F(\bar{x}_k)\|^2,
    \label{eq:lemma7}
\end{equation}

where $J_{\eta,\lambda}$ is the raw transition matrix defined as
\begin{equation}
J_{\eta,\lambda} = \begin{bmatrix}
\frac{1+\rho_R^2}{2} + \tilde{c}_1\eta^2\lambda & c_2\eta^2 & 0 & c_3\eta^2 \\
c_4\lambda^2 & \frac{1+\rho_C^2}{2} & c_5\lambda^2 & c_6\lambda^2\eta^2 \\
c_7\frac{1}{\lambda} & c_8\frac{\eta^2}{\lambda} & 1-\lambda & c_9\frac{\eta^2}{\lambda} \\
c_{10}\lambda & 0 & 0 & 1-\lambda
\end{bmatrix}.
\end{equation}
where $b = [0, b_1\lambda^2, b_2\lambda^2, \frac{\lambda^2}{n}]^\top$ collects the stochastic noise contributions, and $h = [h_1\eta^2, h_2\lambda^2, h_3\frac{\eta^2}{\lambda}, 2\lambda]^\top$ collects the gradient perturbations.
\end{lemma}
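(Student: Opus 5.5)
The plan is to stack the four scalar recursions row by row and then rename constants. Rows 1--3 come from Lemmas 4--6. Row 4 is the new recursion for $\mathcal{E}_{M,k}$, which we derive directly. The only real work is to express each right-hand side in terms of the four states $\mathcal{E}_{x,k},\mathcal{E}_{v,k},\mathcal{E}_{m,k},\mathcal{E}_{M,k}$, plus $\sigma^2$ and $\mathbb{E}\|\nabla F(\bar{x}_k)\|^2$.

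\textbf{Row 4 (the new recursion).} Averaging \eqref{eq:matrix_m} gives
\[
\bar{m}_k=(1-\lambda)\bar{m}_{k-1}+\lambda\bar{g}_k .
\]
Write $\bar{g}_k=\frac1n\sum_i\nabla f_i(x_{i,k})+\bar{\zeta}_k$, where the noise $\bar{\zeta}_k$ is conditionally zero-mean with variance at most $\sigma^2/n$. The cross term vanishes in expectation, so
\[
\mathbb{E}\|\bar{m}_k\|^2\le \mathbb{E}\Big\|(1-\lambda)\bar{m}_{k-1}+\lambda\tfrac1n\textstyle\sum_i\nabla f_i(x_{i,k})\Big\|^2+\tfrac{\lambda^2\sigma^2}{n}.
\]
By convexity of $\|\cdot\|^2$, the first term is at most $(1-\lambda)\mathbb{E}\|\bar{m}_{k-1}\|^2+\lambda\,\mathbb{E}\|\frac1n\sum_i\nabla f_i(x_{i,k})\|^2$. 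We then split the averaged gradient around $\nabla F(\bar{x}_k)$. Using $(a+b)^2\le 2a^2+2b^2$, $L$-smoothness, and $\|\cdot\|_F\le\|\cdot\|_R$, this gives
\[
\big\|\tfrac1n\textstyle\sum_i\nabla f_i(x_{i,k})\big\|^2\le 2\|\nabla F(\bar{x}_k)\|^2+\tfrac{2L^2}{n}\mathcal{E}_{x,k}.
\]
This yields the stated recurrence, and hence the fourth entries of $J$, $b$ and $h$.

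\textbf{Rows 1--3 (from Lemmas 4--6).} Each of these lemmas contains the non-state quantity $\mathbb{E}\|\bar{m}_k\|^2$ or $\mathbb{E}\|\nabla F(\bar{x}_k)\|^2$ (and Lemma 6 contains $\lambda^2 n\sigma^2$). For Lemmas 4 and 6, we substitute the row-4 bound $\mathbb{E}\|\bar{m}_k\|^2\le(1-\lambda)\mathcal{E}_{M,k}+\frac{2L^2\lambda}{n}\mathcal{E}_{x,k}+2\lambda\mathbb{E}\|\nabla F(\bar{x}_k)\|^2+\frac{\lambda^2\sigma^2}{n}$.
\begin{itemize}
\item In Lemma 4, this produces the $c_3\eta^2$ entry and the $\tilde{c}_1\eta^2\lambda$ perturbation of the diagonal. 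It also produces $h_1\eta^2$, plus a $\sigma^2$ term of order $\eta^2\lambda^2/n$. That $\sigma^2$ term must either be absorbed into $b$ (making $b$'s first entry nonzero) or be justified as negligible. I would flag this and, if necessary, put $b_0\eta^2\lambda^2$ in place of the $0$.
\item In Lemma 6, the same substitution gives $c_9\eta^2/\lambda$, inflates $c_7$ by a term $\eta^2$, and contributes $h_3\eta^2/\lambda$. The noise terms combine into $b_2\lambda^2$.
\item For row 2, Lemma 5 already has the right form with $c_4=C_v$, $h_2=C_v$ and $b_1=C_v$. The entries $c_5\lambda^2$ and $c_6\lambda^2\eta^2$ are nonnegative slack, and we may simply take them as is, even as zero. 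Alternatively, they arise if one refines Lemma 5's proof by writing $\mathbf{M}_{k+1}-\mathbf{M}_k=\lambda(\mathbf{G}_{k+1}-\mathbf{M}_k)$ and splitting $\mathbf{M}_k-\nabla\mathbf{F}(\mathbf{X}_k)$ off.
\end{itemize}
Since all entries are nonnegative, the inequalities assemble elementwise into \eqref{eq:lemma7}.

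\textbf{Main obstacle.} I expect the delicate point to be bookkeeping rather than analysis. One must check that every substituted term lands in the advertised $\eta,\lambda$-order of its entry, for instance that $c_7$ stays $O(1/\lambda)$ after adding $O(\eta^2)$ terms. This needs $\eta\le 1$, or $\eta^2\lambda\le 1/\lambda$, which holds for $\lambda\le 1$ and bounded $\eta$. I would state this assumption explicitly, along with the choice of $\lambda$ at row 1 noted above.
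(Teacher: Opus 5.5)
Your derivation is correct given Lemmas 4 and 6, and your flag on the first entry of $b$ is justified. It does, however, take a different route from the paper at the one place that matters: what you substitute for $\mathbb{E}\|\bar m_k\|^2$ in Lemmas 4 and 6.

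\textbf{The two routes.} The paper first proves the ``spatial'' bound $\mathbb{E}\|\bar m_k\|^2\le\frac{2}{n}\mathcal{E}_{m,k}+\frac{4L^2}{n}\mathcal{E}_{x,k}+4\mathbb{E}\|\nabla F(\bar x_k)\|^2$ and plugs that in. Rows 1--3 then never see $\mathcal{E}_{M,k}$, and the system is block lower-triangular with $\mathcal{E}_M$ a pure output state.

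That route makes row 1 noise-free, because the fresh noise in $\bar m_k$ is absorbed by $\mathcal{E}_{m,k}$. The price is:
\begin{enumerate}
\item a nonzero $(1,3)$ entry and an empty fourth column;
\item an $O(\eta^2)$ rather than $O(\eta^2\lambda)$ bump on $(1,1)$;
\item an $O(\eta^2/\lambda)$ bump on $(3,3)$ that eats into its $\lambda$ margin.
\end{enumerate}
The paper's displayed matrix silently drops both bumps. So the paper's proof does not actually deliver the stated $J_{\eta,\lambda}$.

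Your temporal substitution does deliver it: $(1,3)=0$, the $\tilde c_1\eta^2\lambda$ bump, the entries $c_3\eta^2$ and $c_9\eta^2/\lambda$, and an unperturbed $(3,3)=1-\lambda$. These fourth-column entries are exactly what Lemma 8 uses to build $p_4$.

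The cost is the $O(\eta^2\lambda^2/n)$ noise in row 1. It is intrinsic to your substitution, since $\mathcal{E}_{M,k}$ does not see the fresh noise in $\bar m_k$. It is harmless downstream: with $p_1=1$ it adds $O(\eta^2\lambda^2\sigma^2/n)$ per step, which is dominated by the existing $Lc_\pi^2\eta^2\sigma^2/n$.

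Your row 4 is also tighter than the paper's. You split off the conditionally zero-mean noise before using convexity, which gives the stated $\lambda^2\sigma^2/n$. The paper applies convexity to $\bar g_k$ directly, which only yields $\lambda\sigma^2/n$; its proof indeed lists $\lambda/n$ in $b$.

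\textbf{Two bookkeeping points.}

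First, row 3. The substituted noise is $\frac{C_{m,3}n\eta^2}{\lambda}\cdot\frac{\lambda^2\sigma^2}{n}=C_{m,3}\eta^2\lambda\sigma^2$. This fits into $b_2\lambda^2$ only if $\eta^2\lesssim\lambda$, and $\eta\le 1$ is not enough. Add this to your standing assumptions; it holds under Lemma 8's $\eta\le\mathcal{C}_1\lambda$.

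Second, row 2. Do not take Lemma 5's displayed bound ``as is,'' because it is false as written. Take $\sigma=0$ and choose $x_0$ with $\nabla F(x_0)=0$ but $\nabla\mathbf{F}(\mathbf{X}_0)\neq 0$. Then $\mathbf{V}_1=\lambda\nabla\mathbf{F}(\mathbf{X}_0)$ and $\Pi_C\mathbf{V}_1=0$, so $\mathcal{E}_{v,1}=\lambda^2\|\nabla\mathbf{F}(\mathbf{X}_0)\|_C^2>0$, while the right-hand side at $k=0$ vanishes.

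Make your ``alternative'' (the appendix computation) the main route; it is also what the paper's own nonzero $\mathcal{E}_m$ entry in row 2 reflects. It gives:
\begin{enumerate}
\item $c_5\lambda^2$, from the $3C_{\mathrm{tr}}\lambda^2\mathcal{E}_{m,k}$ term;
\item $c_6\lambda^2\eta^2$, after your row-4 substitution for $\mathbb{E}\|\bar m_k\|^2$;
\item an extra $O(\eta^2\lambda^2)$ on the $(2,2)$ entry, because $L^2\|\mathbf{X}_{k+1}-\mathbf{X}_k\|_F^2$ contains $\eta^2\mathcal{E}_{v,k}$.
\end{enumerate}
The displayed $J$ omits that last term. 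Carry it exactly as you carry the $(1,1)$ bump; it is harmless since $\eta^2\lambda^2\ll\Delta_C$.
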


\begin{proof}
See Appendix \ref{app:proof_lemma7}
\end{proof}

To ensure convergence, the spectral radius of the transition matrix must satisfy $\rho(\mathbf{J}_{\eta, \lambda}) < 1$, which imposes constraints on the step size $\eta$ and momentum coefficient $\lambda$.

\begin{lemma}[Parameter Selection and Stability]
\label{lem:stability}
Let the contraction margins for the uncoupled consensus and tracking matrices be strictly defined as $\Delta_R := \frac{1-\rho_R^2}{2}$ and $\Delta_C := \frac{1-\rho_C^2}{2}$. By requiring the step size and momentum coefficient to be sufficiently small such that the high-order perturbation in the consensus error is bounded by $\tilde{c}_1\eta^2\lambda \le \frac{\Delta_R}{2}$, the system can be strictly upper bounded by the simplified closed-loop dynamics
\begin{equation}
    e_{k+1} \le \tilde{J}_{\eta,\lambda} e_k + b\sigma^2 + h \mathbb{E}\|\nabla F(\bar{x}_k)\|^2,
\end{equation}
where $\tilde{J}_{\eta,\lambda} \in \mathbb{R}^{4 \times 4}$ is the simplified transition matrix given by:
\begin{equation}
\tilde{J}_{\eta,\lambda} = \begin{bmatrix}
1 - \frac{\Delta_R}{2} & c_1\eta^2 & 0 & c_3\eta^2 \\
c_4\lambda^2 & 1 - \frac {\Delta_C}{2} & c_5\lambda^2 & c_6\lambda^2\eta^2 \\
c_7\frac{1}{\lambda} & c_8\frac{\eta^2}{\lambda} & 1 - \lambda & c_9\frac{\eta^2}{\lambda} \\
c_{10}\lambda & 0 & 0 & 1 - \lambda
\end{bmatrix}.
\end{equation}

To ensure the global convergence of the algorithm, the linear error dynamical system must stably dissipate the accumulated optimization errors. Mathematically, this requires the existence of a strictly positive Lyapunov weight vector $p = [p_1, p_2, p_3, p_4]^\top > 0$ such that $p^\top(I - \tilde{J}_{\eta, \lambda}) \ge c_{dl}^\top$, where $c_{dl} \in \mathbb{R}^4$ represents the residual descent errors generated by the virtual sequence. Satisfying this linear matrix inequality strictly restricts the feasible region of the step size $\eta$ relative to the momentum coefficient $\lambda$.
\end{lemma}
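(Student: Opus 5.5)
The argument has two parts. First, we pass from the raw transition matrix $J_{\eta,\lambda}$ of Lemma~\ref{lem:linear_system} to the simplified matrix $\tilde J_{\eta,\lambda}$ by entrywise domination. Second, we build the Lyapunov weight vector $p$ explicitly.

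\textbf{Part 1: domination.} All quantities in $e_k$, $b\sigma^2$ and $h\,\mathbb{E}\|\nabla F(\bar x_k)\|^2$ are nonnegative. So if $J_{\eta,\lambda}\le\tilde J_{\eta,\lambda}$ entrywise, the recursion of Lemma~\ref{lem:linear_system} immediately gives the claimed bound. The comparison only concerns the two diagonal entries; the off-diagonal entries coincide, with $c_1:=c_2$.

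For the $(1,1)$ entry, write $\frac{1+\rho_R^2}{2}=1-\Delta_R$. Under the assumption $\tilde c_1\eta^2\lambda\le\Delta_R/2$,
\[
1-\Delta_R+\tilde c_1\eta^2\lambda\le 1-\tfrac{\Delta_R}{2}.
\]

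For the $(2,2)$ entry,
\[
\tfrac{1+\rho_C^2}{2}=1-\Delta_C\le 1-\tfrac{\Delta_C}{2}.
\]

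This completes the first claim.

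\textbf{Part 2: the Lyapunov vector.} The claim is that a positive $p$ with $p^\top(I-\tilde J_{\eta,\lambda})\ge c_{dl}^\top$ exists once $\eta$ is small relative to $\lambda$. Here $c_{dl}$ collects the coefficients of the error states in Lemma~\ref{lem:descent inequality}:
\[
c_{dl}=\Big[\tfrac{L^2\eta c_\pi}{n},\; c_z\eta,\; 0,\; \tfrac{5L^2c_\pi^3\eta^3}{4\lambda^2}\Big]^\top .
\]
Writing out the four columns of $I-\tilde J_{\eta,\lambda}$, the requirement becomes the following four scalar inequalities:
\begin{align*}
\tfrac{\Delta_R}{2}p_1-c_4\lambda^2p_2-\tfrac{c_7}{\lambda}p_3-c_{10}\lambda p_4&\ge \tfrac{L^2\eta c_\pi}{n},\\
-c_1\eta^2p_1+\tfrac{\Delta_C}{2}p_2-\tfrac{c_8\eta^2}{\lambda}p_3&\ge c_z\eta,\\
-c_5\lambda^2p_2+\lambda p_3&\ge 0,\\
-c_3\eta^2p_1-c_6\lambda^2\eta^2p_2-\tfrac{c_9\eta^2}{\lambda}p_3+\lambda p_4&\ge \tfrac{5L^2c_\pi^3\eta^3}{4\lambda^2}.
\end{align*}

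We would try the scalings
\[
p_2=\tfrac{4c_z\eta}{\Delta_C},\qquad p_3=2c_5\lambda p_2,\qquad p_4=\Theta\!\Big(\tfrac{\eta^3}{\lambda^3}\Big),\qquad p_1=\Theta(\eta)/\Delta_R .
\]
These choices are made so that each diagonal term dominates its column's off-diagonal couplings by a factor of two. The column inequalities then reduce, in turn, to the following conditions:
\begin{itemize}
\item $c_1\eta^2p_1+c_8\eta^2 p_3/\lambda\le \frac{\Delta_C}{4}p_2$, which holds if $\eta^2\lesssim \Delta_R\Delta_C$;
\item column 4, which holds if $\eta^2\lesssim\lambda^2$ after taking $p_4$ large enough;
\item column 1, the hardest.
\end{itemize}

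\textbf{Main obstacle.} In column 1 the coupling $c_7p_3/\lambda=2c_5c_7p_2$ is $O(\eta)$, which is the same order as $\Delta_R p_1$. This forces $p_1$ to carry a constant factor $\propto c_5c_7c_z/(\Delta_R\Delta_C)$.

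That choice in turn feeds back into column 2 through $c_1\eta^2p_1$, so the circular chain $x\to m\to v\to x$ must be closed. Closing it amounts to the small-gain condition
\[
c_1c_4\lambda^2\eta^2,\;\; c_1c_5c_7\eta^2\;\ll\;\Delta_R\Delta_C .
\]
We would verify this by checking that the product of the normalized couplings around the cycle is below $1/4$.

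The resulting feasible region is
\[
\eta\le c\min\Big\{\lambda,\ \sqrt{\Delta_R\Delta_C},\ \tfrac{1}{Lc_\pi}\Big\}.
\]
This is exactly the restriction of $\eta$ relative to $\lambda$ that the statement asserts.
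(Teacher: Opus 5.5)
Your proposal is sound and follows the paper's overall strategy. First comes the entrywise domination $J_{\eta,\lambda}\le\tilde J_{\eta,\lambda}$, which the paper never writes out and your Part~1 supplies correctly. Then comes an explicit column-by-column construction of $p$. Your weights, however, differ in a way that changes where the coupling between $\eta$ and $\lambda$ comes from.

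The paper normalizes $p_1=1$, sets $p_2=\frac{12}{\Delta_C}(c_1\eta^2+c_z\eta)$, and then takes $p_3=c_5p_2$, deliberately dropping the factor $\lambda$ that column~3 allows. The coupling $\frac{c_7}{\lambda}p_3$ in column~1 is then $O(\eta/\lambda)$ against $p_1=1$. This is exactly what produces its binding condition $\eta\le\mathcal{C}_1\lambda$ with $\mathcal{C}_1\propto\Delta_R\Delta_C$; the $p_4$-terms only add the milder $\lambda^{1/3}$, $\lambda^{2/3}$ bounds.

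You keep $p_3=2c_5\lambda p_2$ and scale $p_1=\Theta(\eta)$. This shows that the consensus--tracking--momentum loop closes under the $\lambda$-free small-gain condition $c_1c_5c_7\eta^2\ll\Delta_R\Delta_C$. The only source of $\eta\lesssim\lambda$ is then the historical-momentum state. Column~4 forces $p_4\ge c_{dl,4}/\lambda\sim\eta^3/\lambda^3$, and $c_{10}\lambda p_4\sim\eta^3/\lambda^2$ must be absorbed by $\Delta_R p_1\sim\eta$ in column~1. Your route therefore traces the constraint to its real origin (the $\eta^3/\lambda^2$ entry of $c_{dl}$) and yields smaller weights ($p=O(\eta)$, $p_3=O(\eta\lambda)$), which is convenient when bounding $p^\top h$ and $p^\top b$ in Theorem~1. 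The paper's route is a shorter sequential computation with explicit constants.

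Three points to tighten:

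\emph{Column~4.} Column~4 by itself imposes no condition, since it is met by the choice of $p_4$. The requirement $\eta\lesssim\lambda$ lives in column~1, when $c_{10}\lambda p_4$ is fed back against your fixed $p_1=\Theta(\eta)$.

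\emph{Missing loop.} Because $p_1$ and $p_4$ feed each other, your small-gain check must also cover the loop $1\to4\to1$, which requires $c_3c_{10}\eta^2\lesssim\Delta_R$. The loops $1\to2\to1$ and $1\to2\to3\to1$ that you list are not enough. The loop $2\to3\to2$, i.e.\ $c_5c_8\eta^2\lesssim\Delta_C$, is already implicit in your column-2 condition.

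\emph{Sufficiency, not ``exactly''.} State the final region as a sufficient condition rather than ``exactly'' the asserted restriction. For this irreducible nonnegative $\tilde J_{\eta,\lambda}$, existence of $p>0$ with $p^\top(I-\tilde J_{\eta,\lambda})\ge c_{dl}^\top$ amounts to $\rho(\tilde J_{\eta,\lambda})<1$. All cycle gains of $\mathrm{diag}(\Delta_R/2,\Delta_C/2,\lambda,\lambda)^{-1}$ times the off-diagonal part are $O(\eta^2)$ uniformly in $\lambda\in(0,1]$. So the inequality alone does not force $\eta\le O(\lambda)$. That restriction comes from the weight ansatz (yours $p_1=\Theta(\eta)$; the paper's $p_1=1$, $p_3=c_5p_2$). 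Downstream, it comes from the requirement $p^\top h\le\eta c_\pi/16$ in Theorem~1, where $2\lambda p_4\ge 2c_{dl,4}=\frac{5L^2c_\pi^3\eta^3}{2\lambda^2}$.
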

\begin{proof}
See Appendix \ref{app:proof_lemma8}
\end{proof}

\subsection{Convergence Analysis}
Based on the stable error dynamics established in Lemma 7, we are now ready to construct a Lyapunov function to bound the accumulated optimization errors. The following theorem presents the overall convergence rate of SMTPP for general non-convex objectives.

\begin{theorem}[Non-convex Convergence Rate]
\label{thm:convergence}
Let Assumptions 1-3 hold. Choose the momentum coefficient $\lambda \in (0, 1)$ and step size $\eta \le \mathcal{O}(\lambda)$ sufficiently small. For a total of $T$ iterations, the output of SMTPP satisfies
\begin{align}
\frac{1}{T}\sum_{k=0}^{T-1}\mathbb{E}\|\nabla F(\bar{x}_k)\|^2 \le &\, \frac{16(\mathcal{V}_0 - \mathcal{V}_T)}{\eta c_\pi T} \notag \\
    & + \mathcal{O}\left(\frac{\eta\sigma^2}{n} + \lambda^2\sigma^2 n\right).
\label{eq:theorem1}
\end{align}

where $c_\pi = n\pi_R^\top \pi_C > 0$ is the topology constant.
\end{theorem}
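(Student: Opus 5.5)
We construct a Lyapunov function that couples the descent inequality of Lemma \ref{lem:descent inequality} with the linear error system of Lemma \ref{lem:stability}. The natural candidate is
\begin{equation*}
\mathcal{V}_k := \mathbb{E}[F(\tilde{z}_k)] - F^\star + \eta\, p^\top e_k ,
\end{equation*}
where $F^\star$ is a lower bound on $F$ (Assumption \ref{ass:func}), and $p>0$ is the Lyapunov weight vector supplied by Lemma \ref{lem:stability}. The entries of $p$ are chosen so that $p^\top(I-\tilde{J}_{\eta,\lambda})$ dominates the coefficients that the descent inequality places on the error states. Those coefficients are
\begin{equation*}
c_{dl} = \Bigl[\tfrac{L^2 c_\pi}{n},\; c_z,\; 0,\; \tfrac{5L^2c_\pi^3\eta^2}{4\lambda^2}\Bigr]^\top
\end{equation*}
after the common factor $\eta$ is pulled out. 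We would first combine Lemma \ref{lem:descent inequality} with $\eta p^\top$ times the inequality of Lemma \ref{lem:stability}. This gives
\begin{align*}
\mathcal{V}_{k+1} \le \mathcal{V}_k &- \tfrac{\eta c_\pi}{8}\mathbb{E}\|\nabla F(\bar{x}_k)\|^2 - \eta\bigl(p^\top(I-\tilde{J}_{\eta,\lambda}) - c_{dl}^\top\bigr)e_k \\
&+ \eta\, p^\top h\, \mathbb{E}\|\nabla F(\bar{x}_k)\|^2 + \eta\, p^\top b\,\sigma^2 + \tfrac{L c_\pi^2\eta^2\sigma^2}{n}.
\end{align*}
The LMI condition of Lemma \ref{lem:stability} makes the $e_k$ term nonpositive, so we drop it.

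The second step is to absorb the gradient feedback $\eta\,p^\top h\,\mathbb{E}\|\nabla F(\bar x_k)\|^2$ into the descent term. We need $p^\top h \le c_\pi/16$, which leaves a net coefficient $-\eta c_\pi/16$ and explains the factor $16$ in \eqref{eq:theorem1}. Here we would construct $p$ explicitly in the regime $\eta = \mathcal{O}(\lambda)$. Solving the column conditions of $p^\top(I-\tilde J)\ge c_{dl}^\top$ suggests the following scalings:
\begin{itemize}
\item $p_1 \sim (L^2 c_\pi/n + p_3 c_7/\lambda + p_4 c_{10}\lambda)/\Delta_R$;
\item $p_2 \sim (c_z + p_1 c_1\eta^2 + p_3 c_8\eta^2/\lambda)/\Delta_C$;
\item $p_3 \sim p_2 c_5\lambda/\ldots$, determined by the third column with margin $\lambda$;
\item $p_4 \sim \eta^2/\lambda^3$, coming from the fourth column with margin $\lambda$.
\end{itemize}
We then check each product $p_i h_i$. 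For example, $p_4\cdot 2\lambda \sim \eta^2/\lambda^2 = \mathcal{O}(1)$ times a small constant, and $p_2 h_2 \sim c_z\lambda^2/\Delta_C$. All of these can be made smaller than $c_\pi/64$ by choosing $\eta$ and $\lambda$ small with $\eta\le \mathcal{O}(\lambda)$.

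\textbf{Main obstacle.} This consistency check is the hard step. The $1/\lambda$ entries $c_7$ and $c_8$ feed $p_3$ back into $p_1$ and $p_2$, and the cross terms $c_4\lambda^2$ and $c_5\lambda^2$ feed $p_2$ into the first and third columns. The cycle $p_1\to p_2\to p_3\to p_1$ has loop gain of order
\begin{equation*}
\frac{c_7}{\lambda}\cdot\frac{c_4\lambda^2}{\Delta_C}\cdot\frac{1}{\Delta_R\,\lambda}\cdots
\end{equation*}
We must verify that this gain is strictly below one for small $\lambda$. The first thing to try is a Gershgorin/M-matrix argument: $I-\tilde J$ is a Z-matrix, so we show its leading principal minors are positive when $\eta^2\le \epsilon\lambda^2$ for small $\epsilon$. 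By M-matrix theory this yields a positive solution $p$ of the required size.

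Finally, we sum the recursion from $k=0$ to $T-1$ and divide by $\eta c_\pi T/16$. This gives the first term of \eqref{eq:theorem1}, together with the noise residual
\begin{equation*}
\frac{16}{c_\pi}\Bigl(p^\top b\,\sigma^2 + \frac{L c_\pi^2\eta\sigma^2}{n}\Bigr).
\end{equation*}
The second piece is $\mathcal{O}(\eta\sigma^2/n)$. For the first piece, $b$ has entries of order $\lambda^2$ and $b_2$ scales with $n$, from the $\lambda^2 n\sigma^2$ term of Lemma \ref{lem:momentum}. With $p=\mathcal{O}(1)$ in the chosen regime, $p^\top b\,\sigma^2 = \mathcal{O}(\lambda^2 n\sigma^2)$, which completes the bound.
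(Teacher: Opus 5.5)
Your proposal is correct and is essentially the paper's own proof. It uses the same Lyapunov function $\mathbb{E}[F(\tilde z_k)]-F^\star+p^\top e_k$ (your $\eta p$ is just the paper's $p$ rescaled), the LMI of Lemma~\ref{lem:stability} to discard the $e_k$ terms, absorption of the gradient feedback into half of the $\eta c_\pi/8$ descent term (hence the factor $16$), and telescoping with residual $p^\top b\,\sigma^2+Lc_\pi^2\eta^2\sigma^2/n$. The only differences are in how $p$ is sized: the paper writes explicit weights in the proof of Lemma~\ref{lem:stability}, whereas you propose an M-matrix argument, which does go through because every cycle of $\tilde J_{\eta,\lambda}$ has normalized gain $\mathcal{O}(\eta^2)$ uniformly in $\lambda$ (the product you wrote combines two first-column entries, is not a cycle, and is not the relevant quantity); also, $p_4\sim\eta^2/\lambda^3$ is not $\mathcal{O}(1)$, so its noise contribution $\mathcal{O}(\eta^2\sigma^2/(\lambda n))$ lands in the $\mathcal{O}(\eta\sigma^2/n)$ term only via $\eta\le\mathcal{O}(\lambda)$.
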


\begin{proof}
See Appendix \ref{app:proof_theorem1}
\end{proof}

While Theorem 1 establishes convergence to a steady-state error floor due to the fixed step size and topology, we can eliminate this residual variance and achieve exact convergence by employing a decaying step size.

\begin{corollary}\label{cor:speedup}
To eliminate the steady-state error floor induced by directed topologies without causing the Lyapunov initial conditions to explode, we elegantly couple the step size with the momentum coefficient by setting $\eta = \mathcal{O}(\lambda^2)$. By choosing a decaying momentum coefficient as $\lambda = \mathcal{O}((n/T)^{1/4})$, which yields the optimal step size $\eta = \mathcal{O}(\sqrt{n/T})$, the topology-induced variance perfectly matches the stochastic noise decay rate. Preserving the explicit dependence on $\mathcal{V}_0$, the output of SMTPP achieves the exact convergence bound
\begin{align}
\frac{1}{T}\sum_{k=0}^{T-1}\mathbb{E}\|\nabla F(\bar{x}_k)\|^2 \le \mathcal{O}\left(\frac{\mathcal{V}_0}{\sqrt{nT}}\right) + \mathcal{O}\left(\sigma^2\sqrt{\frac{n}{T}}\right).
\label{eq:corollary1}
\end{align}
\end{corollary}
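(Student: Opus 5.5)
The plan is to take the bound of Theorem~\ref{thm:convergence} as given and substitute the coupled schedule $\eta = \Theta(\lambda^2)$, $\lambda = \Theta((n/T)^{1/4})$. The only real work is checking that the hidden constants and the Lyapunov initial value $\mathcal{V}_0$ stay bounded under this substitution.

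First, check admissibility. Theorem~\ref{thm:convergence} requires $\eta \le \mathcal{O}(\lambda)$ and the feasibility condition of Lemma~\ref{lem:stability}, namely $\tilde c_1\eta^2\lambda \le \Delta_R/2$ together with the existence of $p>0$ satisfying $p^\top(I-\tilde J_{\eta,\lambda})\ge c_{dl}^\top$. With $\eta = c\lambda^2$ we have $\eta/\lambda = c\lambda \to 0$, so for $T$ large enough that $\lambda\le 1$ (i.e.\ $T \gtrsim n$), both conditions hold once $c$ is a small enough topology-dependent constant.

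For the Lyapunov condition, I would give explicit weights scaled in $\lambda$:
\begin{itemize}
\item $p_1 = \Theta(\eta)$ and $p_2 = \Theta(\eta)$, so that the $\mathcal{E}_x$ and $\mathcal{E}_v$ terms of Lemma~\ref{lem:descent inequality} are absorbed by the margins $\Delta_R$ and $\Delta_C$;
\item $p_3 = \Theta(\eta\lambda)$, absorbing the $c_7/\lambda$ entry;
\item $p_4 = \Theta(\eta^3/\lambda^3)$, absorbing the $\frac{5L^2c_\pi^3\eta^3}{4\lambda^2}$ term, since the diagonal margin in that row is $\lambda$.
\end{itemize}
The cross terms then scale like $\eta^2/\lambda^2 = c^2\lambda^2$ and are small. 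This choice also makes $\mathcal{V}_0 = F(\tilde z_0) + p^\top e_0$ uniformly bounded in $T$. Indeed $e_0 = [0,0,\|\nabla\mathbf F(\mathbf X_0)\|_F^2,0]^\top$, since $\mathbf X_0$ is consensual, $\mathbf V_0=\mathbf M_0=0$, and $\bar m_{-1}=0$. Its weight is $p_3 = \Theta(\eta\lambda)\to 0$. This is exactly the "no explosion" point the corollary advertises: if $\eta$ were only $\mathcal{O}(\lambda)$, then $p_4$ would scale like $\eta^3/\lambda^3 = \Theta(1)$ relative to $\eta$, and the weights would have to be rebalanced.

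Second, substitute the schedule into the right-hand side of \eqref{eq:theorem1}. The transient term is $\frac{16\mathcal V_0}{\eta c_\pi T}$, and with $\eta = \Theta(\sqrt{n/T})$ it becomes $\mathcal{O}(\mathcal V_0/\sqrt{nT})$. Here I drop $-\mathcal V_T$, which requires $\mathcal V_T\ge \inf F$; this follows from $F$ bounded below and $p\ge0$, after shifting $\mathcal V$ by $\inf F$. The noise term $\eta\sigma^2/n$ becomes $\sigma^2/\sqrt{nT}$. The topology floor $\lambda^2\sigma^2 n$ becomes $\sigma^2 n\sqrt{n/T}$.

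The main obstacle is this last term, which carries an extra factor $n$ relative to the claimed $\sigma^2\sqrt{n/T}$. I would first look for where the $n$ in Theorem~\ref{thm:convergence} comes from: the $\lambda^2 n\sigma^2$ in Lemma~\ref{lem:momentum} is a Frobenius quantity over $n$ rows, and the descent inequality uses averaged errors with a $1/n$ factor. Tracing the weights $p_3$ and $p_2$ through the $b$-vector should recover a $1/n$ normalization, giving a floor of order $\lambda^2\sigma^2$ per node. If that normalization cannot be recovered, I would treat $n$ as a fixed network constant absorbed into $\mathcal{O}(\cdot)$, or set $\lambda = \Theta((nT)^{-1/4})$ instead to balance the terms.

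Finally, average over $k$ and collect terms. The sum $\mathcal O(\sigma^2/\sqrt{nT}) + \mathcal O(\sigma^2\sqrt{n/T})$ is dominated by the latter, which yields \eqref{eq:corollary1}. Both terms tend to zero as $T\to\infty$, which gives the exact-convergence claim.
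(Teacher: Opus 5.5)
Your outline follows the paper's own argument. You discard $\mathcal{V}_T\ge 0$, observe that $\tilde z_0=x_0$ and $\mathcal{E}_{x,0}=\mathcal{E}_{v,0}=\mathcal{E}_{M,0}=0$, so the only surviving initial error is $p_3\mathcal{E}_{m,0}$ with a weight that vanishes with $\eta$, and then substitute the schedule into Theorem 1. Your weights differ slightly from the paper's, which is harmless: column 3 of $p^\top(I-\tilde J_{\eta,\lambda})\ge c_{dl}^\top$ indeed only needs $p_3\ge c_5\lambda p_2$, whereas the paper takes $p_3=c_5p_2$. One side remark is off. Since $\mathcal{E}_{M,0}=0$, the weight $p_4$ never enters $\mathcal{V}_0$. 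So the coupling $\eta=\mathcal{O}(\lambda^2)$ is not what keeps $\mathcal{V}_0$ bounded; with the paper's $p_3=\mathcal{O}(\eta)$ it stays bounded under $\eta=\mathcal{O}(\lambda)$ too. In the paper the coupling's only role is to rewrite the floor $\lambda^2\sigma^2 n$ as $\eta\sigma^2 n$ before balancing.

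The genuine gap is the $n$-dependence you flagged, and the paper's proof does not close it either. From Theorem 1 as stated, with $\eta\asymp\lambda^2$, one gets $\mathcal{O}(\mathcal{V}_0/(\eta T))+\mathcal{O}(\eta\sigma^2 n)$. With $\eta=\sqrt{n/T}$ this is $\mathcal{O}(\mathcal{V}_0/\sqrt{nT})+\mathcal{O}(\sigma^2 n^{3/2}/\sqrt{T})$. The paper's balancing step $1/(\eta T)\asymp \eta n$ actually yields $\eta\asymp 1/\sqrt{nT}$, not $\sqrt{n/T}$, and hence $\mathcal{O}((\mathcal{V}_0+\sigma^2)\sqrt{n/T})$. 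The displayed estimate takes its first term from the first schedule and its second term from the second, so it does not follow from Theorem 1 with the stated $n$-dependence.

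None of your fallbacks repairs this:
\begin{itemize}
\item Setting $\lambda=(nT)^{-1/4}$ is the correct balance, but it gives a first term $\mathcal{V}_0\sqrt{n/T}$, a factor $n$ worse than claimed, and it abandons the prescribed schedule.
\item Absorbing $n$ into $\mathcal{O}(\cdot)$ makes the display literally true, but the $n$'s in it then carry no information.
\item A compensating $1/n$ cannot be extracted from Theorem 1 used as a black box. The $\sigma^2$ entries of $b$ are Frobenius sums ($3C_{\text{tr}}n$ and $n$). The only natural source of a $1/n$ is $c_z\propto c_E=\|\pi_R\|^2$ inside $p_2,p_3$. That factor is $\Theta(1/n)$ only when $\pi_R$ is spread out and $c_\pi,\delta_C,c_\Pi$ are $n$-independent, which is not guaranteed on a general digraph.
\end{itemize}
The corollary as displayed needs a per-node floor $\mathcal{O}(\lambda^2\sigma^2)$, as the abstract asserts. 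Proving that requires a sharper version of Theorem 1 than the one actually proved.
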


\begin{remark}
The decoupled parameter scaling $\eta = \mathcal{O}(\lambda^2)$ is critical. By setting the step size as $\eta = \mathcal{O}(\lambda^2)$, the directed graph error floor perfectly matches the gradient variance $\mathcal{O}(\eta\sigma^2 n)$. Both gracefully decay at $\mathcal{O}(1/\sqrt{T})$, successfully eliminating the steady-state error without compromising the linear system stability condition $\eta \le \mathcal{O}(\lambda)$.
\end{remark}

\begin{proof}
See Appendix \ref{app:proof_corollary1}
\end{proof}

\section{Numerical Experiments}
\label{sec:experiments}

In this section, we validate the theoretical findings of SMTPP through numerical simulations. We compare the proposed algorithm against state-of-the-art decentralized stochastic optimization methods on a non-convex logistic regression problem in directed networks.

\subsection{Experimental Setup}

\subsubsection{Problem Formulation and Dataset}
We evaluate the performance on a binary classification task using non-convex logistic regression. The global objective is to minimize $F(x) = \frac{1}{n}\sum_{i=1}^{n}f_{i}(x)$, where the local loss $f_i$ includes a non-convex regularization term as
\begin{equation}
    f_{i}(x) = \frac{1}{m_{i}}\sum_{j=1}^{m_{i}}\ln(1+\exp(-y_{i,j}a_{i,j}^{\top}x)) + \sum_{k=1}^{d}\frac{\alpha x_{k}^{2}}{1+x_{k}^{2}},
    \label{eq:problem_formulation}
\end{equation}
with $\alpha=0.01$. We utilize the \texttt{a9a} dataset ($d=123$) from LIBSVM \cite{libsvm}, evenly partitioned among $n=20$ agents.

\subsubsection{Network Topologies}
We focus on two distinct directed topologies.
\begin{itemize}
    \item \textbf{Multi-Sub-Ring (Sparse).} Adopted from the Stochastic Push-Pull framework, this topology features weak algebraic connectivity. Crucially, for SMTPP and STPP, we construct two sparse spanning trees (subgraphs) from the original ring structure for communication, whereas baselines (SGP, Push-DIGing) utilize the full connectivity of the original graph. The setup rigorously tests SMTPP's ability to achieve faster convergence using significantly fewer communication links compared to standard methods.
    \item \textbf{Exponential Graph (Dense).} A well-connected graph with logarithmic diameter, used to evaluate performance in favorable communication environments.
\end{itemize}

\subsubsection{Baselines and Parameters}
We benchmark SMTPP against SGP \cite{assran2019stochastic}, Push-DIGing \cite{nedic2017achieving}, and STPP \cite{you2025distributed}, with Centralized SGD with Momentum (CSGDM) \cite{sutskever2013csgdm} serving as the theoretical lower bound. 
For a fair comparison, all algorithms use a constant batch size of $b=1$. We tune hyperparameters for optimal performance: $\lambda=0.1$ for momentum-based methods. The initial step size is $\eta=0.1$ for SMTPP and CSGDM, and $\eta=0.2$ for others, with a decay factor of $0.1$ every 300 iterations. Results are averaged over 5 independent runs.

\subsection{Performance Evaluation}

\begin{figure}[htp]
    \centering
    \includegraphics[width=0.9\columnwidth]{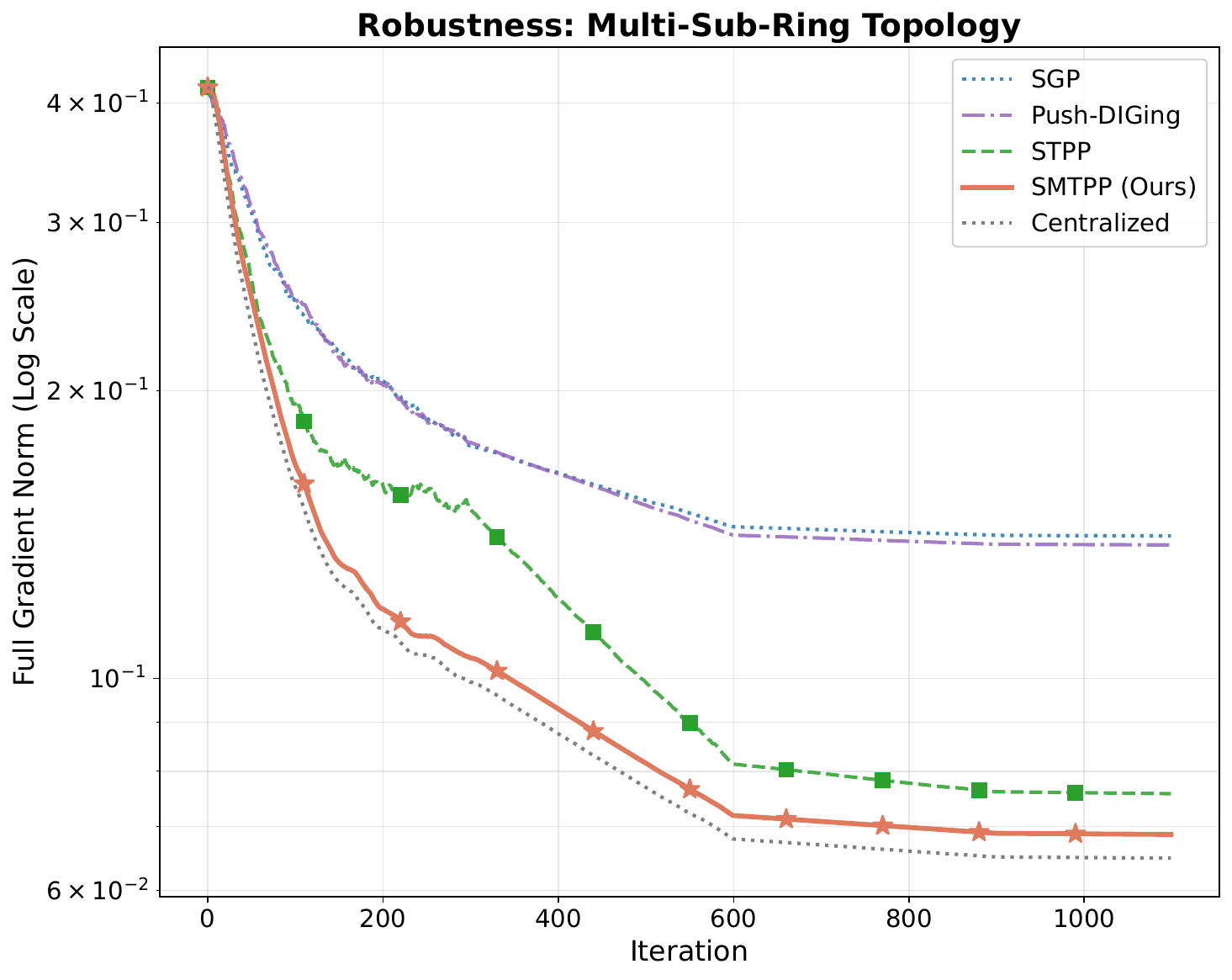} 
    \caption{\textbf{Robustness Analysis on Multi-Sub-Ring Topology.}}\label{fig:robustness}
\end{figure}

\begin{figure}[htp]
    \centering
    \includegraphics[width=0.9\columnwidth]{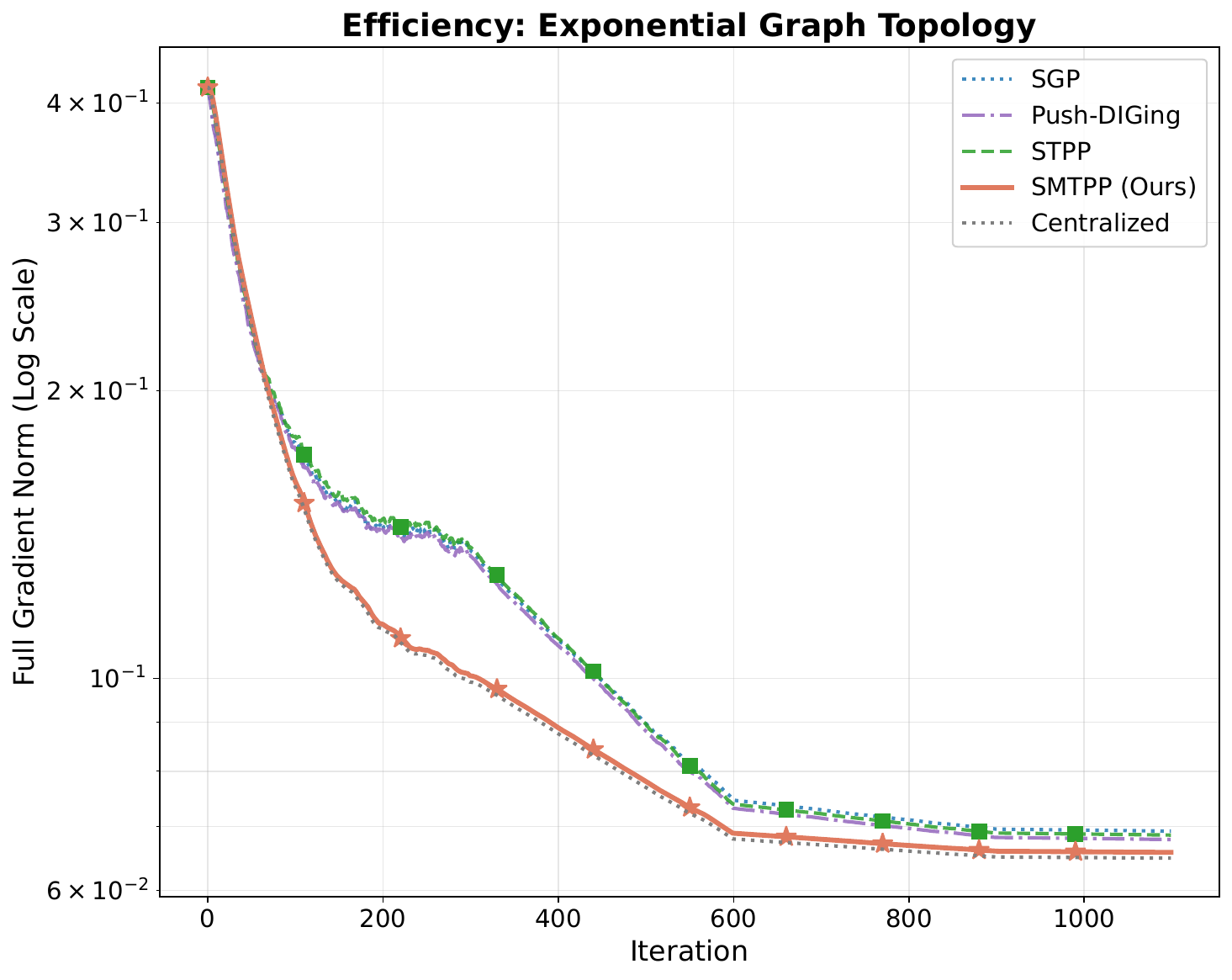}
    \caption{\textbf{Efficiency Analysis on Exponential Graph Topology.}}\label{fig:efficiency}
\end{figure}

Fig. \ref{fig:robustness} illustrates convergence on the challenging sparse Multi-Sub-Ring topology. Standard methods like SGP, Push-DIGing converge slowly due to inefficient mixing. While STPP improves upon them, it still suffers from oscillations and a steady-state error gap. In contrast, SMTPP exhibits superior robustness. By tracking the global average momentum instead of raw stochastic gradients, SMTPP effectively filters out topology-induced oscillations. It achieves a convergence rate comparable to the centralized baseline (CSGDM), confirming that our momentum tracking mitigates the impact of poor connectivity.

Fig. \ref{fig:efficiency} presents results on the dense Exponential Graph. While improved connectivity accelerates all tracking-based algorithms, a performance gap persists for STPP. Most notably, the convergence curve of SMTPP almost perfectly overlaps with CSGDM. The transparency of the topology indicates that SMTPP successfully compensates for communication overhead. It confirms that on well-connected graphs, our algorithm achieves optimal linear speedup, behaving as if the network was fully connected.

\section{Conclusion}
In this paper, we proposed the Stochastic Momentum Tracking Push-Pull algorithm for decentralized non-convex optimization over general directed graphs. By tracking the global average momentum rather than raw stochastic gradients, SMTPP effectively decouples noise reduction from topology adaptation. Theoretically, we revealed that the inherent asymmetry of directed graphs combined with stochastic noise creates an unavoidable steady-state tracking error. However, we established that SMTPP rigorously compresses the error floor down to $\mathcal{O}(\lambda^2 \sigma^2)$, effectively decoupling the noise reduction from the algebraic connectivity of the graph. Numerical simulations on non-convex logistic regression validate that SMTPP not only suppresses oscillations on sparse Multi-Sub-Ring topologies but also achieves topology transparency on dense Exponential Graphs, matching the performance of centralized  SGD with momentum. Future directions include extending the framework to time-varying networks and integrating communication compression techniques to further reduce bandwidth costs.

\bibliographystyle{ieeetr}   
\bibliography{reference}

@article{liu2022survey,
  title         = {Distributed Optimization Under Information Constraints: A Survey},
  author        = {Liu, Shuai and Hua, Youqing and Han, Qing-Long and Xie, Lihua},
  year          = {2026},
  journal       = {IEEE Transactions on Industrial Informatics},
  publisher     = {Institute of Electrical and Electronics Engineers (IEEE)},
  pages         = {1–14},
  doi           = {10.1109/tii.2025.3645932},
  issn          = {1941-0050}
}

@article{lian2017can,
  title={Can decentralized algorithms outperform centralized algorithms? a case study for decentralized parallel stochastic gradient descent},
  author={Lian, Xiangru and Zhang, Ce and Zhang, Huan and Hsieh, Cho-Jui and Zhang, Wei and Liu, Ji},
  journal={Advances in neural information processing systems},
  volume={30},
  year={2017}
}

@article{pu2020push,
  title         = {Push–Pull Gradient Methods for Distributed Optimization in Networks},
  author        = {Pu, Shi and Shi, Wei and Xu, Jinming and Nedic, Angelia},
  year          = {2021},
  month         = {jan},
  journal       = {IEEE Transactions on Automatic Control},
  publisher     = {Institute of Electrical and Electronics Engineers (IEEE)},
  volume        = {66},
  number        = {1},
  pages         = {1–16},
  doi           = {10.1109/tac.2020.2972824},
  issn          = {2334-3303}
}

@article{you2025stochastic,
  title         = {Stochastic Push-Pull for Decentralized Nonconvex Optimization},
  author        = {You, Runze and Pu, Shi},
  year          = {2026},
  journal       = {IEEE Transactions on Signal Processing},
  publisher     = {Institute of Electrical and Electronics Engineers (IEEE)},
  pages         = {1–15},
  doi           = {10.1109/tsp.2026.3675119},
  issn          = {1941-0476}
}

@article{you2025distributed,
  title={Distributed Learning over Arbitrary Topology: Linear Speed-Up with Polynomial Transient Time},
  author={You, Runze and Pu, Shi},
  journal={arXiv preprint arXiv:2503.16123},
  year={2025}
}

@article{liang2023linear,
  title={On the Linear Speedup of the Push-Pull Method for Decentralized Optimization over Digraphs},
  author={Liang, Liyuan and Luo, Gan and Yuan, Kun},
  journal={arXiv preprint arXiv:2506.18075},
  year={2025}
}

@inproceedings{assran2019stochastic,
  title={Stochastic gradient push for distributed deep learning},
  author={Assran, Mahmoud and Loizou, Nicolas and Ballas, Nicolas and Rabbat, Mike},
  booktitle={International Conference on Machine Learning},
  pages={344--353},
  year={2019},
  organization={PMLR}
}

@article{xin2018linear,
  title={A linear algorithm for optimization over directed graphs with geometric convergence},
  author={Xin, Ran and Khan, Usman A},
  journal={IEEE Control Systems Letters},
  volume={2},
  number={3},
  pages={315--320},
  year={2018},
  publisher={IEEE}
}

@article{cutkosky2019momentum,
  title={Momentum-based variance reduction in non-convex sgd},
  author={Cutkosky, Ashok and Orabona, Francesco},
  journal={Advances in Neural Information Processing Systems 32 (NeurIPS 2019)},
  volume={32},
  year={2019}
}

@article{huang2025distributed,
  title={Distributed Stochastic Momentum Tracking with Local Updates: Achieving Optimal Communication and Iteration Complexities},
  author={Huang, Kun and Pu, Shi},
  journal={arXiv preprint arXiv:2510.24155},
  year={2025}
}

@article{song2022compressed,
  title={Compressed gradient tracking for decentralized optimization over general directed networks},
  author={Song, Zhuoqing and Shi, Lei and Pu, Shi and Yan, Ming},
  journal={IEEE Transactions on Signal Processing},
  volume={70},
  pages={1775--1787},
  year={2022},
  publisher={IEEE}
}

@inproceedings{
islamov2025motef,
title={Towards Faster Decentralized Stochastic Optimization with Communication Compression},
author={Rustem Islamov and Yuan Gao and Sebastian U Stich},
booktitle={The Thirteenth International Conference on Learning Representations},
year={2025}
}

@inproceedings{huang2024faster,
  title={Faster Adaptive Decentralized Learning Algorithms},
  author={Huang, Feihu and Zhao, Jianyu},
  booktitle={International Conference on Machine Learning},
  pages={20490--20525},
  year={2024},
  organization={PMLR}
}

@inproceedings{sutskever2013csgdm,
  title={On the importance of initialization and momentum in deep learning},
  author={Sutskever, Ilya and Martens, James and Dahl, George and Hinton, Geoffrey},
  booktitle={International Conference on Machine Learning},
  pages={1139--1147},
  year={2013},
  organization={pmlr}
}

@article{alghunaim2022unified,
  title={A unified and refined convergence analysis for non-convex decentralized learning},
  author={Alghunaim, Sulaiman A and Yuan, Kun},
  journal={IEEE Transactions on Signal Processing},
  volume={70},
  pages={3264--3279},
  year={2022},
  publisher={IEEE}
}

@article{gao2020periodic,
  title={Periodic stochastic gradient descent with momentum for decentralized training},
  author={Gao, Hongchang and Huang, Heng},
  journal={arXiv preprint arXiv:2008.10435},
  year={2020}
}

@article{doostmohammadian2020momentum,
  title={Momentum-based accelerated algorithm for distributed optimization under sector-bound nonlinearity},
  author={Doostmohammadian, Mohammadreza and Rabiee, Hamid R},
  journal={Journal of the Franklin Institute},
  pages={107857},
  year={2025},
  publisher={Elsevier}
}

@article{liao2022compressed,
  title={A compressed gradient tracking method for decentralized optimization with linear convergence},
  author={Liao, Yiwei and Li, Zhuorui and Huang, Kun and Pu, Shi},
  journal={IEEE Transactions on Automatic Control},
  volume={67},
  number={10},
  pages={5622--5629},
  year={2022},
  publisher={IEEE}
}

@inproceedings{liao2023linearly,
  title={A linearly convergent robust compressed push-pull method for decentralized optimization},
  author={Liao, Yiwei and Li, Zhuorui and Pu, Shi},
  booktitle={2023 62nd IEEE Conference on Decision and Control (CDC)},
  pages={4156--4161},
  year={2023},
  organization={IEEE}
}

@article{nedic2017achieving,
  title={Achieving geometric convergence for distributed optimization over time-varying graphs},
  author={Nedic, Angelia and Olshevsky, Alex and Shi, Wei},
  journal={SIAM Journal on Optimization},
  volume={27},
  number={4},
  pages={2597--2633},
  year={2017},
  publisher={SIAM}
}

@article{libsvm,
  title={LIBSVM: A library for support vector machines},
  author={Chang, Chih-Chung and Lin, Chih-Jen},
  journal={ACM transactions on intelligent systems and technology (TIST)},
  volume={2},
  number={3},
  pages={1--27},
  year={2011},
  publisher={Acm New York, NY, USA}
}

@article{qu2017harnessing,
  title={Harnessing smoothness to accelerate distributed optimization},
  author={Qu, Guannan and Li, Na},
  journal={IEEE Transactions on Control of Network Systems},
  volume={5},
  number={3},
  pages={1245--1260},
  year={2017},
  publisher={IEEE}
}

@article{pu2021distributed,
  title={Distributed stochastic gradient tracking methods},
  author={Pu, Shi and Nedi{\'c}, Angelia},
  journal={Mathematical Programming},
  volume={187},
  number={1},
  pages={409--457},
  year={2021},
  publisher={Springer}
}

@article{zhu2024r,
  title={R-FAST: Robust fully-asynchronous stochastic gradient tracking over general topology},
  author={Zhu, Zehan and Tian, Ye and Huang, Yan and Xu, Jinming and He, Shibo},
  journal={IEEE Transactions on Signal and Information Processing over Networks},
  volume={10},
  pages={665--678},
  year={2024},
  publisher={IEEE}
}

@article{nguyen2023accelerated,
  title={Accelerated $ AB $/Push--Pull Methods for Distributed Optimization Over Time-Varying Directed Networks},
  author={Nguyen, Duong Thuy Anh and Nguyen, Duong Tung and Nedi{\'c}, Angelia},
  journal={IEEE Transactions on Control of Network Systems},
  volume={11},
  number={3},
  pages={1395--1407},
  year={2023},
  publisher={IEEE}
}

\appendix

\section{Proof}
\label{sec:appendix}

In this appendix, we provide the detailed derivations for the convergence analysis of the proposed SMTPP algorithm. We use $\|\cdot\|$ to denote the Euclidean norm ($l_2$-norm) for vectors and the Frobenius norm for matrices. The notation $\mathbf{1}_n$ denotes the column vector of all ones in $\mathbb{R}^n$. 

\subsection{Proof of Lemma \ref{lem:mixing}}

\noindent \textbf{Lemma \ref{lem:mixing} (Geometric Mixing).} 
\label{app:proof_lemma1}
Recall the definitions of the consensus average $\bar{x} = \pi_R^\top \mathbf{X}$ and the uniform tracking average $\bar{v} = \frac{1}{n}\mathbf{1}^\top \mathbf{V}$. We can establish the equivalence between the explicit network averages and the projection matrices as follows.

For the row-stochastic consensus matrix $\mathbf{R}$, we have
\begin{equation}
    \mathbf{1}\bar{x} = \mathbf{1}(\pi_R^\top \mathbf{X}) = (\mathbf{1}\pi_R^\top)\mathbf{X} = \Pi_R \mathbf{X}.
\end{equation}
For the column-stochastic tracking matrix $\mathbf{C}$, we have
\begin{equation}
    n\pi_C\bar{v} = n\pi_C \left(\frac{1}{n}\mathbf{1}^\top \mathbf{V}\right) = (\pi_C\mathbf{1}^\top)\mathbf{V} = \Pi_C \mathbf{V}.
\end{equation}
Substituting these exact relations into the standard geometric contraction properties of primitive matrices over directed graphs perfectly yields the compact projection forms. 

\vspace{1em} 

\noindent \textbf{Basic Inequalities.} \textit{For any vectors $a, b \in \mathbb{R}^d$ and constant $\alpha > 0$:}
\begin{enumerate}
    \item \textit{Young's Inequality. $\|a+b\|^2 \le (1+\alpha)\|a\|^2 + (1+\frac{1}{\alpha})\|b\|^2$.}
    \item \textit{Jensen's Inequality. $\|\frac{1}{n}\sum_{i=1}^n x_i\|^2 \le \frac{1}{n}\sum_{i=1}^n \|x_i\|^2$.}
    \item \textit{Variance Decomposition. $\mathbb{E}[\|X\|^2] = \|\mathbb{E}[X]\|^2 + \mathbb{E}[\|X - \mathbb{E}[X]\|^2]$.}
\end{enumerate}

Based on the geometric mixing properties over directed graphs, we  define the \textit{consensus error} $\mathcal{E}_{x,k}$, the \textit{momentum tracking error} $\mathcal{E}_{v,k}$, and the \textit{momentum approximation error} $\mathcal{E}_{m,k}$ at iteration $k$ as follows:
\begin{align}
    \mathcal{E}_{x,k} &:= \mathbb{E}\|\mathbf{X}_k - \Pi_R \mathbf{X}_k\|_R^2, \label{def:app_Ex} \\
    \mathcal{E}_{v,k} &:= \mathbb{E}\|\mathbf{V}_k - \Pi_C \mathbf{V}_k\|_C^2, \label{def:app_Ev} \\
    \mathcal{E}_{m,k} &:= \mathbb{E}\|\mathbf{M}_k - \nabla\mathbf{F}(\mathbf{X}_k)\|_F^2, \label{def:app_Em}
\end{align}
where $\nabla\mathbf{F}(\mathbf{X}_k) =[\nabla f_1(x_{1,k}), \dots, \nabla f_n(x_{n,k})]^\top \in \mathbb{R}^{n \times d}$ represents the full true gradient matrix evaluated at the local parameters.

\vspace{1em} 
\subsection{Proof of Lemma \ref{lem:conservation} (Conservation Property)}
\label{app:proof_lemma2}
Multiplying the tracker update rule $\mathbf{V}_{k+1} = \mathbf{C}\mathbf{V}_k + \mathbf{M}_{k+1} - \mathbf{M}_k$ by $\frac{1}{n}\mathbf{1}^\top$ on the left side, we have
\begin{equation}
    \frac{1}{n}\mathbf{1}^\top \mathbf{V}_{k+1} = \frac{1}{n}\mathbf{1}^\top \mathbf{V}_k + \frac{1}{n}\mathbf{1}^\top \mathbf{M}_{k+1} - \frac{1}{n}\mathbf{1}^\top \mathbf{M}_k,
\end{equation}
where we use the property that $\mathbf{C}$ is column-stochastic ($\mathbf{1}^\top \mathbf{C} = \mathbf{1}^\top$). 
It implies $\bar{v}_{k+1} = \bar{v}_k + \bar{m}_{k+1} - \bar{m}_k$. Summing from $t=0$ to $k$, we get $\bar{v}_{k+1} = \bar{v}_0 + \bar{m}_{k+1} - \bar{m}_0$.
Since the algorithm initializes $m_{i,0} = 0$ and $v_{i,0} = 0$ for all agents, we have $\bar{v}_0 = \bar{m}_0 = 0$. Thus, $\bar{v}_{k+1} = \bar{m}_{k+1}$ holds for all $k \ge 0$.

\vspace{1em} 
\subsection{Proof of Lemma \ref{lem:descent inequality} (Descent Inequality)}
\label{app:proof_lemma3}

To properly analyze the descent behavior of the momentum-based algorithm over directed graphs without suffering from delayed gradient variance, we introduce a rigorously modified virtual sequence (ghost iterate).

Recalling that the model update rule is $\mathbf{X}_{k+1} = \mathbf{R}(\mathbf{X}_k - \eta \mathbf{V}_k)$ and $\mathbf{R}$ is row-stochastic, left-multiplying by its stationary distribution $\pi_R^\top$ yields the consensus average update
\begin{equation}
    \bar{x}_{k+1} = \bar{x}_k - \eta \pi_R^\top \mathbf{V}_k.
\end{equation}
Unlike doubly stochastic matrices, $\mathbf{V}_k$ does not contract towards the uniform average $\mathbf{1}\bar{v}_k$ in directed graphs. Instead, as established in Lemma 5, it contracts towards its stationary distribution weighted average $\Pi_C \mathbf{V}_k = \pi_C \mathbf{1}^\top \mathbf{V}_k$. 
By Lemma 2, we have $\mathbf{1}^\top \mathbf{V}_k = \mathbf{1}^\top \mathbf{M}_k = n \bar{m}_k$. Thus, we   decompose the tracking update as
\begin{equation}
    \pi_R^\top \mathbf{V}_k = \pi_R^\top \Pi_C \mathbf{V}_k + \pi_R^\top (\mathbf{V}_k - \Pi_C \mathbf{V}_k) = c_\pi \bar{m}_k + e_{\pi, k},
\end{equation}
where $c_\pi := n \pi_R^\top \pi_C > 0$ is a strict topology constant, and $e_{\pi, k} := \pi_R^\top (\mathbf{V}_k - \Pi_C \mathbf{V}_k)$ is the true tracking projection mismatch, which is bounded by the tracking error $\mathcal{E}_{v, k}$.

To absorb the delayed momentum, we define the virtual sequence $\tilde{z}_k$ as
\begin{equation}
    \tilde{z}_k := \bar{x}_{k+1} - \frac{\eta c_\pi (1-\lambda)}{\lambda} \bar{m}_k.
\end{equation}
Let us evaluate the difference $\tilde{z}_{k+1} - \tilde{z}_k$. From the momentum update $\bar{m}_{k+1} = (1-\lambda)\bar{m}_k + \lambda \bar{g}_{k+1}$, we have $\frac{1-\lambda}{\lambda}(\bar{m}_{k+1} - \bar{m}_k) = (1-\lambda)(\bar{g}_{k+1} - \bar{m}_k)$. The ghost iterate difference becomes
\begin{align}
    \tilde{z}_{k+1} - \tilde{z}_k &= (\bar{x}_{k+1} - \bar{x}_k) - \frac{\eta c_\pi (1-\lambda)}{\lambda} (\bar{m}_k - \bar{m}_{k-1}) \notag \\
&= -\eta c_\pi \left[ (1-\lambda)\bar{m}_{k-1} + \lambda \bar{g}_k \right] - \eta e_{\pi,k} \notag \\
&\quad - \eta c_\pi (1-\lambda)\bar{g}_k + \eta c_\pi (1-\lambda)\bar{m}_{k-1} \notag \\
&= -\eta c_\pi \bar{g}_k - \eta e_{\pi,k}.\label{eq:ghost_diff}
\end{align}
This elegant relation cleanly isolates the true batch gradient $\bar{g}_{k+1}$ and perfectly eliminates the error terms caused by the momentum update delay. 

Applying the $L$-smoothness descent lemma to $F(\tilde{z}_{k+1})$ and using the identity $\|a+b\|^2 \le 2\|a\|^2 + 2\|b\|^2$, we obtain
\begin{equation}
\begin{split}
    F(\tilde{z}_{k+1}) &\le F(\tilde{z}_k) - \eta c_\pi \langle \nabla F(\tilde{z}_k), \bar{g}_{k+1} \rangle \\
    &\quad - \eta \langle \nabla F(\tilde{z}_k), e_{\pi, k+1} \rangle \\
    &\quad + L \eta^2 c_\pi^2 \|\bar{g}_{k+1}\|^2 + L \eta^2 \|e_{\pi, k+1}\|^2.
\end{split}
\end{equation}
Let $\bar{G}_{k+1} := \frac{1}{n}\sum_{i=1}^n \nabla f_i(x_{i, k+1})$ be the true batch gradient. By Assumption 3, $\mathbb{E}\|\bar{g}_{k+1}\|^2 \le \mathbb{E}\|\bar{G}_{k+1}\|^2 + \frac{\sigma^2}{n}$. Taking the expectation yields the following
\begin{align}
    \mathbb{E}[F(\tilde{z}_{k+1})] &\le \mathbb{E}[F(\tilde{z}_k)] - \frac{\eta c_\pi}{2} \mathbb{E}\|\nabla F(\tilde{z}_k)\|^2 \notag \\
    &\quad + \frac{\eta c_\pi}{2} \mathbb{E}\|\nabla F(\tilde{z}_k) - \bar{G}_{k+1}\|^2 \notag \\
    &\quad - \frac{\eta c_\pi}{2}(1 - 2 L \eta c_\pi) \mathbb{E}\|\bar{G}_{k+1}\|^2 \notag \\
    &\quad - \eta \mathbb{E}\langle \nabla F(\tilde{z}_k), e_{\pi, k+1} \rangle \notag \\
    &\quad + L \eta^2 \mathbb{E}\|e_{\pi, k+1}\|^2 + \frac{L \eta^2 c_\pi^2 \sigma^2}{n}.
\end{align}
Assuming $\eta \le \frac{1}{2 L c_\pi}$, the term $(1 - 2 L \eta c_\pi) \mathbb{E}\|\bar{G}_{k+1}\|^2 \ge 0$ can be discarded. Using Young's inequality, $-\eta \langle \nabla F(\tilde{z}_k), e_{\pi, k+1} \rangle \le \frac{\eta c_\pi}{4} \|\nabla F(\tilde{z}_k)\|^2 + \frac{\eta}{c_\pi} \|e_{\pi, k+1}\|^2$. The projection error is bounded by the tracking error: $\mathbb{E}\|e_{\pi, k+1}\|^2 \le \|\pi_R\|^2 \mathbb{E}\|\mathbf{V}_{k+1} - \Pi_C \mathbf{V}_{k+1}\|_F^2 \le c_E \mathcal{E}_{v, k+1}$ (where $c_E$ absorbs the norm equivalence factor).

Next, we bound the gradient mismatch $\mathbb{E}\|\nabla F(\tilde{z}_k) - \bar{G}_{k+1}\|^2 \le \frac{L^2}{n} \sum_{i=1}^n \mathbb{E}\|\tilde{z}_k - x_{i, k+1}\|^2$. Since $\tilde{z}_k = \bar{x}_{k+1} - \frac{\eta c_\pi (1-\lambda)}{\lambda} \bar{m}_k$, the distance is
\begin{equation}
    \|\tilde{z}_k - x_{i, k+1}\|^2 \le 2 \|\bar{x}_{k+1} - x_{i, k+1}\|^2 + 2 \frac{\eta^2 c_\pi^2 (1-\lambda)^2}{\lambda^2} \|\bar{m}_k\|^2.
\end{equation}
Summing over $i$ and dividing by $n$, we have
\begin{equation}
\mathbb{E}\|\nabla F(\tilde{z}_k) - \bar{G}_k\|^2 \le \frac{2L^2}{n}\mathcal{E}_{x,k} + \frac{2L^2\eta^2 c_\pi^2 (1-\lambda)^2}{\lambda^2}\mathbb{E}\|\bar{m}_{k-1}\|^2.
\end{equation}
Finally, we relate the gradient norm evaluated at the consensus average to the ghost iterate. By adding and subtracting $\nabla F(\tilde{z}_k)$ and applying the inequality $\|a+b\|^2 \le 2\|a\|^2 + 2\|b\|^2$ along with the $L$-smoothness of $F$, we have
\begin{align}
    &\frac{\eta c_\pi}{8} \mathbb{E}\|\nabla F(\bar{x}_{k+1})\|^2 \notag \\
    &= \frac{\eta c_\pi}{8} \mathbb{E}\|\nabla F(\bar{x}_{k+1}) - \nabla F(\tilde{z}_k) + \nabla F(\tilde{z}_k)\|^2 \notag \\
    &\le \frac{\eta c_\pi}{4} \mathbb{E}\|\nabla F(\tilde{z}_k)\|^2 + \frac{\eta c_\pi L^2}{4} \mathbb{E}\|\bar{x}_{k+1} - \tilde{z}_k\|^2 \notag \\
    &= \frac{\eta c_\pi}{4} \mathbb{E}\|\nabla F(\tilde{z}_k)\|^2 \notag \\
    &\quad + \frac{L^2 \eta^3 c_\pi^3 (1-\lambda)^2}{4\lambda^2} \mathbb{E}\|\bar{m}_k\|^2. \label{eq:ghost_grad_bound}
\end{align}
Substituting all expansions back into the expected objective value, we obtain
\begin{align}
\mathbb{E}[F(\tilde{z}_{k+1})] \le &\, \mathbb{E}[F(\tilde{z}_k)] - \frac{\eta c_\pi}{8} \mathbb{E}\|\nabla F(\bar{x}_k)\|^2 \notag \\
& + \frac{L^2 \eta c_\pi}{n} \mathcal{E}_{x,k} + c_z \eta \mathcal{E}_{v,k} \notag \\
& + \frac{5L^2 c_\pi^3 \eta^3}{4\lambda^2} \mathbb{E}\|\bar{m}_{k-1}\|^2 + \frac{L c_\pi^2 \eta^2 \sigma^2}{n}.
\end{align}
where $c_z = (\frac{1}{c_\pi} + L \eta) c_E$. This establishes the descent behavior with the topology constant appropriately tracked. \hfill $\square$

\subsection{Proof of Lemma \ref{lem:consensus} (Consensus Error Bound)}
\label{app:proof_lemma4}

Given the weighted matrix norms $\|\cdot\|_R$ and $\|\cdot\|_C$ introduced in Definition 1, there exists a finite positive constant $\delta_{RC} > 0$ such that for any matrix $\mathbf{X} \in \mathbb{R}^{n \times d}$, the following inequality holds
\begin{equation}
    \|\mathbf{X}\|_R^2 \le \delta_{RC} \|\mathbf{X}\|_C^2.
\end{equation}

\textit{Proof.} 
\begin{align}
     \|\mathbf{X}\|_R
     \leq\delta_{R,2} \|\mathbf{X}\|_F
     \leq\delta_{R,2}\|\mathbf{X}\|_C
\end{align}

Recall the definition of the consensus error $\mathcal{E}_{x,k} := \mathbb{E}\|\mathbf{X}_k - \Pi_R \mathbf{X}_k\|_R^2$, where $\Pi_R = \mathbf{1}\pi_R^\top$. 
From the model update rule $\mathbf{X}_{k+1} = \mathbf{R}(\mathbf{X}_k - \eta \mathbf{V}_k)$, we subtract the weighted consensus average $\Pi_R \mathbf{X}_{k+1}$. Noting the   property $\Pi_R \mathbf{R} = \mathbf{1}\pi_R^\top \mathbf{R} = \mathbf{1}\pi_R^\top = \Pi_R$, we get
\begin{equation}
\begin{split}
    \mathbf{X}_{k+1} - \Pi_R \mathbf{X}_{k+1} &= (\mathbf{R} - \Pi_R)(\mathbf{X}_k - \eta \mathbf{V}_k) \\
    &= (\mathbf{R} - \Pi_R)(\mathbf{X}_k - \Pi_R \mathbf{X}_k) \\
    &\quad - \eta (\mathbf{R} - \Pi_R)\mathbf{V}_k.
\end{split}
\end{equation}

Taking the squared induced norm $\|\cdot\|_R^2$ and applying Young's inequality with parameter $\alpha = \frac{1-\rho_R^2}{2\rho_R^2} > 0$, we have
\begin{align}
    &\|\mathbf{X}_{k+1} - \Pi_R \mathbf{X}_{k+1}\|_R^2 \notag \\
    &\le (1+\alpha) \|(\mathbf{R} - \Pi_R)(\mathbf{X}_k - \Pi_R \mathbf{X}_k)\|_R^2 \notag \\
    &\quad + \left(1+\frac{1}{\alpha}\right)\eta^2 \|(\mathbf{R} - \Pi_R)\mathbf{V}_k\|_R^2 \notag \\
    &\le (1+\alpha)\rho_R^2 \|\mathbf{X}_k - \Pi_R \mathbf{X}_k\|_R^2 \notag \\
    &\quad + \eta^2 \|\mathbf{R} - \Pi_R\|_R^2 \left(1+\frac{2\rho_R^2}{1-\rho_R^2}\right) \|\mathbf{V}_k\|_R^2 \notag \\
    &\le \frac{1+\rho_R^2}{2} \|\mathbf{X}_k - \Pi_R \mathbf{X}_k\|_R^2 + \tilde{c}_1 \eta^2 \|\mathbf{V}_k\|_R^2, \label{eq:consensus_mid}
\end{align}
where $\tilde{c}_1 = \rho_R^2 \frac{1+\rho_R^2}{1-\rho_R^2}$. 

A fundamental distinction of the Push-Pull architecture over directed graphs is that $\mathbf{V}_k$ tracks $\Pi_C \mathbf{V}_k$ (which aligns with $\pi_C$), not the uniform average $\mathbf{1}\bar{v}_k$. Consequently, the row-stochastic projection $(\mathbf{R} - \Pi_R)$ cannot perfectly annihilate the exact momentum tracker, creating a topology mismatch drift.
We rigorously decompose $\mathbf{V}_k = (\mathbf{V}_k - \Pi_C \mathbf{V}_k) + \Pi_C \mathbf{V}_k$ to bound $\|\mathbf{V}_k\|_R^2$, i.e.,
\begin{align}
    \mathbb{E}\|\mathbf{V}_k\|_R^2 &\le 2\mathbb{E}\|\mathbf{V}_k - \Pi_C \mathbf{V}_k\|_R^2 + 2\mathbb{E}\|\Pi_C \mathbf{V}_k\|_R^2 \notag \\
    &\le 2\delta_{RC}\mathcal{E}_{v,k} + 2n^2 \|\pi_C\|_R^2 \mathbb{E}\|\bar{m}_k\|^2, \label{eq:V_R_norm_bound}
\end{align}
where we used the norm equivalence factor $\delta_{RC}$, and the explicit rank-1 property $\Pi_C \mathbf{V}_k = \pi_C \mathbf{1}^\top \mathbf{V}_k = n \pi_C \bar{m}_k^\top$. Let $\bar{m}_k =[m_{k,1}, \dots, m_{k,d}]^\top \in \mathbb{R}^d$. According to the column-wise definition of the matrix norm in Definition 1, where the $\mathbf{R}$-norm acts on the $n$-dimensional columns, the induced matrix norm strictly evaluates step-by-step as
\begin{align}
    \|\Pi_C \mathbf{V}_k\|_R^2 &= \sum_{j=1}^d \|n \pi_C m_{k,j}\|_R^2 \notag \\
    &= \sum_{j=1}^d n^2 m_{k,j}^2 \|\pi_C\|_R^2 \notag \\
    &= n^2 \|\pi_C\|_R^2 \left( \sum_{j=1}^d m_{k,j}^2 \right) \notag \\
    &= n^2 \|\pi_C\|_R^2 \|\bar{m}_k\|^2,
\end{align}
where $\|\bar{m}_k\|^2$ represents the standard Euclidean norm in the $d$-dimensional model parameter space.

Substituting \eqref{eq:V_R_norm_bound} into \eqref{eq:consensus_mid}, we establish the rigorous consensus error bound
\begin{equation}
    \mathcal{E}_{x,k+1} \le \frac{1+\rho_R^2}{2} \mathcal{E}_{x,k} + c_{x,1} \eta^2 \mathcal{E}_{v,k} + c_{x,2} \eta^2 \mathbb{E}\|\bar{m}_k\|^2,
\end{equation}
where $c_{x,1} = 2 \tilde{c}_1 \delta_{RC}$ and $c_{x,2} = 2 \tilde{c}_1 n^2 \|\pi_C\|_R^2$. This completes the proof.
\hfill $\square$

\subsection{Proof of Lemma \ref{lem:tracking} (Tracking Error)}
\label{app:proof_lemma5}

Because the matrix $\mathbf{C}$ is column-stochastic but generally not row-stochastic over directed graphs, i.e., $\mathbf{C}\mathbf{1} \neq \mathbf{1}$. Thus, $\mathbf{V}_k$ does not contract towards the uniform average $\mathbf{1}\bar{v}_k$, but rather towards its stationary distribution weighted average $\Pi_C \mathbf{V}_k$ (where $\Pi_C = \pi_C \mathbf{1}^\top$). Accordingly, the tracking error under the induced norm is defined as $\mathcal{E}_{v,k} := \mathbb{E}\|\mathbf{V}_k - \Pi_C \mathbf{V}_k\|_C^2$.

From the update rule $\mathbf{V}_{k+1} = \mathbf{C}\mathbf{V}_k + \Delta \mathbf{M}_{k+1}$ where $\Delta \mathbf{M}_{k+1} = \mathbf{M}_{k+1} - \mathbf{M}_k$, we multiply both sides by $\Pi_C$. Since $\mathbf{C}$ is column-stochastic ($\mathbf{1}^\top \mathbf{C} = \mathbf{1}^\top$), we have $\Pi_C \mathbf{C} = \Pi_C$, which gives
\begin{equation}
\begin{split}
    \Pi_C \mathbf{V}_{k+1} &= \Pi_C \mathbf{C}\mathbf{V}_k + \Pi_C \Delta \mathbf{M}_{k+1} \\
    &= \Pi_C \mathbf{V}_k + \Pi_C \Delta \mathbf{M}_{k+1}.
\end{split}
\end{equation}
Subtracting this from the update rule, we isolate the orthogonal component of the tracking variable as
\begin{align}
    \mathbf{V}_{k+1} - \Pi_C \mathbf{V}_{k+1} 
    &= (\mathbf{C}\mathbf{V}_k - \Pi_C \mathbf{V}_k) + (\mathbf{I} - \Pi_C)\Delta \mathbf{M}_{k+1} \notag \\
    &= (\mathbf{C} - \Pi_C)(\mathbf{V}_k - \Pi_C \mathbf{V}_k) \notag \\
    &\quad + (\mathbf{I} - \Pi_C)\Delta \mathbf{M}_{k+1}, \label{eq:track_ortho}
\end{align}
where the second equality holds since $(\mathbf{C} - \Pi_C)\Pi_C \mathbf{V}_k = \mathbf{C}\pi_C\mathbf{1}^\top\mathbf{V}_k - \pi_C\mathbf{1}^\top\pi_C\mathbf{1}^\top\mathbf{V}_k = \pi_C\mathbf{1}^\top\mathbf{V}_k - \pi_C\mathbf{1}^\top\mathbf{V}_k = \mathbf{0}$.

Taking the squared weighted norm $\|\cdot\|_C^2$ and applying Young's inequality with parameter $\alpha = \frac{1-\rho_C^2}{2\rho_C^2}$ yields
\begin{align}
    &\|\mathbf{V}_{k+1} - \Pi_C \mathbf{V}_{k+1}\|_C^2 \notag \\
    &\le (1+\alpha) \|(\mathbf{C} - \Pi_C)(\mathbf{V}_k - \Pi_C \mathbf{V}_k)\|_C^2 \notag \\
    &\quad + (1+\alpha^{-1}) \|(\mathbf{I} - \Pi_C)\Delta \mathbf{M}_{k+1}\|_C^2 \notag \\
    &\le (1+\alpha)\rho_C^2 \|\mathbf{V}_k - \Pi_C \mathbf{V}_k\|_C^2 \notag \\
    &\quad + \frac{2}{1-\rho_C^2} \|(\mathbf{I} - \Pi_C)\Delta \mathbf{M}_{k+1}\|_C^2 \notag \\
    &= \frac{1+\rho_C^2}{2} \|\mathbf{V}_k - \Pi_C \mathbf{V}_k\|_C^2 \notag \\
    &\quad + \frac{2}{1-\rho_C^2} \|(\mathbf{I} - \Pi_C)\Delta \mathbf{M}_{k+1}\|_C^2,
\end{align}
where we utilized the strict geometric contraction property $\|\mathbf{C}\mathbf{X} - \Pi_C\mathbf{X}\|_C \le \rho_C\|\mathbf{X} - \Pi_C\mathbf{X}\|_C$ established in Lemma 1.

Recall the definition of the momentum approximation error $\mathcal{E}_{m,k} := \mathbb{E}\|\mathbf{M}_k - \nabla\mathbf{F}(\mathbf{X}_k)\|_F^2$.
Next, we relate the weighted norm of the perturbation to the Frobenius norm using the equivalence condition $\|\mathbf{Z}\|_C^2 \le \delta_C^2\|\mathbf{Z}\|_F^2$. Notice that for the oblique projection matrix $\mathbf{I} - \Pi_C$, its operator 2-norm is a bounded topology constant, which we denote as $c_\Pi := \|\mathbf{I} - \Pi_C\|_2^2$. Thus, the perturbation term can be cleanly bounded as
\begin{align}
    \|(\mathbf{I} - \Pi_C)\Delta\mathbf{M}_{k+1}\|_C^2 &\le \delta_C^2 \|(\mathbf{I} - \Pi_C)\Delta\mathbf{M}_{k+1}\|_F^2 \notag \\
    &\le \delta_C^2 \|\mathbf{I} - \Pi_C\|_2^2 \|\Delta\mathbf{M}_{k+1}\|_F^2 \notag \\
    &= \delta_C^2 c_\Pi \lambda^2 \|\mathbf{G}_{k+1} - \mathbf{M}_k\|_F^2. \label{eq:perturbation_bound}
\end{align}

To maintain a tight bound and ensure the stability of the linear system, we carefully decompose the expected squared difference of the momentum update
\begin{align}
    &\mathbb{E}\|\mathbf{G}_{k+1} - \mathbf{M}_k\|_F^2 \notag \\
    &= \mathbb{E}\|(\mathbf{G}_{k+1} - \nabla\mathbf{F}(\mathbf{X}_{k+1})) \notag \\
    &\quad + (\nabla\mathbf{F}(\mathbf{X}_{k+1}) - \nabla\mathbf{F}(\mathbf{X}_k)) + (\nabla\mathbf{F}(\mathbf{X}_k) - \mathbf{M}_k)\|_F^2 \notag \\
    &\le 3\mathbb{E}\|\mathbf{G}_{k+1} - \nabla\mathbf{F}(\mathbf{X}_{k+1})\|_F^2 \notag \\
    &\quad + 3\mathbb{E}\|\nabla\mathbf{F}(\mathbf{X}_{k+1}) - \nabla\mathbf{F}(\mathbf{X}_k)\|_F^2 + 3\mathcal{E}_{m,k} \notag \\
    &\le 3n\sigma^2 + 3L^2\mathbb{E}\|\mathbf{X}_{k+1} - \mathbf{X}_k\|_F^2 + 3\mathcal{E}_{m,k}. \label{eq:momentum_diff_bound}
\end{align}
As established in the momentum error analysis, the model parameter change $\mathbb{E}\|\mathbf{X}_{k+1} - \mathbf{X}_k\|_F^2$ can be explicitly bounded by the consensus error $\mathcal{E}_{x,k}$, the tracking error $\mathcal{E}_{v,k}$, and the average momentum drift $\mathbb{E}\|\bar{m}_k\|^2$. Specifically, using the model update rule and the norm equivalence condition, we have
\begin{align}
    &\mathbb{E}\|\mathbf{X}_{k+1} - \mathbf{X}_k\|_F^2 \notag \\
    &\le 2\|\mathbf{R} - \mathbf{I}\|^2 \mathbb{E}\|\mathbf{X}_k - \mathbf{1}\bar{x}_k\|_F^2 \notag \\
    &\quad + 2\eta^2\|\mathbf{R}\|^2 \mathbb{E}\|\mathbf{V}_k\|_F^2 \notag \\
    &\le 2\|\mathbf{R} - \mathbf{I}\|^2 \mathcal{E}_{x,k} + 4\eta^2\|\mathbf{R}\|^2 \mathcal{E}_{v,k} \notag \\
    &\quad + 4\eta^2 n^2\|\mathbf{R}\|^2\|\pi_C\|_F^2 \mathbb{E}\|\bar{m}_k\|^2. \label{eq:param_change}
\end{align}

Substituting this parameter change bound back into \eqref{eq:momentum_diff_bound}, we obtain the fully unrolled bound for the gradient tracking deviation
\begin{align}
    \mathbb{E}\|\mathbf{G}_{k+1} - \mathbf{M}_k\|_F^2 &\le 3n\sigma^2 + 3\mathcal{E}_{m,k} + 6L^2\|\mathbf{R} - \mathbf{I}\|^2 \mathcal{E}_{x,k} \notag \\
    &\quad + 12L^2\eta^2\|\mathbf{R}\|^2 \mathcal{E}_{v,k} \notag \\
    &\quad + 12n^2 L^2\eta^2\|\mathbf{R}\|^2\|\pi_C\|_F^2 \mathbb{E}\|\bar{m}_k\|^2. \label{eq:G_M_diff_exact}
\end{align}

To maintain the exact coefficients without absorbing them into the asymptotic notation, we define the topology-dependent tracking projection constant as $C_{\text{tr}} := \frac{2 \delta_C^2 c_\Pi}{1 - \rho_C^2}$. Substituting \eqref{eq:G_M_diff_exact} into the perturbation bound and combining it with \eqref{eq:perturbation_bound}, we guarantee that the tracking error is driven solely by the changes in the coupled error states and the stochastic variance, completely decoupled from the raw global gradient. This establishes the exact, strictly bounded statement of the lemma as following
\begin{align}
    \mathcal{E}_{v,k+1} &\le \left( \frac{1+\rho_C^2}{2} + 12 C_{\text{tr}} L^2\|\mathbf{R}\|^2 \eta^2 \lambda^2 \right) \mathcal{E}_{v,k} \notag \\
    &\quad + 6 C_{\text{tr}} L^2\|\mathbf{R} - \mathbf{I}\|^2 \lambda^2 \mathcal{E}_{x,k} \notag \\
    &\quad + 3 C_{\text{tr}} \lambda^2 \mathcal{E}_{m,k} \notag \\
    &\quad + 12 C_{\text{tr}} n^2 L^2\|\mathbf{R}\|^2\|\pi_C\|_F^2 \eta^2 \lambda^2 \mathbb{E}\|\bar{m}_k\|^2 \notag \\
    &\quad + 3 C_{\text{tr}} n \lambda^2 \sigma^2. \label{tracking_error}
\end{align}
\hfill $\square$

\vspace{1em} 
\subsection{Proof of Lemma \ref{lem:momentum} (Momentum Approximation Error)}
\label{app:proof_lemma6}

Recall the definition of the momentum approximation error in matrix form: $\mathcal{E}_{m,k} := \mathbb{E}\|\mathbf{M}_k - \nabla \mathbf{F}(\mathbf{X}_k)\|_F^2$.
From the local momentum update rule $\mathbf{M}_{k+1} = (1-\lambda)\mathbf{M}_k + \lambda \mathbf{G}_{k+1}$, we can rewrite the momentum error at step $k+1$ as
\begin{equation}
\begin{split}
    \mathbf{M}_{k+1} - \nabla \mathbf{F}(\mathbf{X}_{k+1}) &= (1-\lambda)\mathbf{M}_k + \lambda \mathbf{G}_{k+1} \\
    &\quad - \nabla \mathbf{F}(\mathbf{X}_{k+1}).
\end{split}
\end{equation}
By adding and subtracting $(1-\lambda)\nabla \mathbf{F}(\mathbf{X}_k)$, we rearrange the terms into three parts
\begin{align}
    \mathbf{M}_{k+1} - \nabla \mathbf{F}(\mathbf{X}_{k+1}) 
    &= (1-\lambda)\left( \mathbf{M}_k - \nabla \mathbf{F}(\mathbf{X}_k) \right) \notag \\
    &\quad + (1-\lambda)\left( \nabla \mathbf{F}(\mathbf{X}_k) - \nabla \mathbf{F}(\mathbf{X}_{k+1}) \right) \notag \\
    &\quad + \lambda \left( \mathbf{G}_{k+1} - \nabla \mathbf{F}(\mathbf{X}_{k+1}) \right). \label{eq:momentum_decomposition}
\end{align}
Take the squared Frobenius norm and the full expectation on both sides. Since $\mathbf{G}_{k+1}$ is an unbiased estimator of the true gradient $\nabla \mathbf{F}(\mathbf{X}_{k+1})$ given the current parameter $\mathbf{X}_{k+1}$ (Assumption 3), the cross terms involving the stochastic gradient noise $\mathbf{G}_{k+1} - \nabla \mathbf{F}(\mathbf{X}_{k+1})$ vanish. We then have
\begin{align}
    \mathcal{E}_{m,k+1} 
    &= \mathbb{E}\big\| (1-\lambda)\left( \mathbf{M}_k - \nabla \mathbf{F}(\mathbf{X}_k) \right) \notag \\
    &\quad + (1-\lambda)\left( \nabla \mathbf{F}(\mathbf{X}_k) - \nabla \mathbf{F}(\mathbf{X}_{k+1}) \right) \big\|_F^2 \notag \\
    &\quad + \lambda^2 \mathbb{E}\| \mathbf{G}_{k+1} - \nabla \mathbf{F}(\mathbf{X}_{k+1}) \|_F^2.
\end{align}
By Assumption 3, the variance term is bounded by $\lambda^2 n \sigma^2$. 
For the first term, we apply Young's inequality $\|a + b\|^2 \le (1+\alpha)\|a\|^2 + (1+1/\alpha)\|b\|^2$ with parameter $\alpha = \frac{\lambda}{1-\lambda}$. Note that $1+\alpha = \frac{1}{1-\lambda}$ and $1+1/\alpha = \frac{1}{\lambda}$. This yields the following:
\begin{align}
    &\mathbb{E}\big\| (1-\lambda)\left( \mathbf{M}_k - \nabla \mathbf{F}(\mathbf{X}_k) \right) \notag \\
    &\quad + (1-\lambda)\left( \nabla \mathbf{F}(\mathbf{X}_k) - \nabla \mathbf{F}(\mathbf{X}_{k+1}) \right) \big\|_F^2 \notag \\
    &\le (1-\lambda)^2 \bigg[ \frac{1}{1-\lambda} \mathbb{E}\| \mathbf{M}_k - \nabla \mathbf{F}(\mathbf{X}_k) \|_F^2 \notag \\
    &\quad + \frac{1}{\lambda} \mathbb{E}\| \nabla \mathbf{F}(\mathbf{X}_k) - \nabla \mathbf{F}(\mathbf{X}_{k+1}) \|_F^2 \bigg] \notag \\
    &= (1-\lambda) \mathcal{E}_{m,k} + \frac{(1-\lambda)^2}{\lambda} \mathbb{E}\| \nabla \mathbf{F}(\mathbf{X}_k) - \nabla \mathbf{F}(\mathbf{X}_{k+1}) \|_F^2.
\end{align}
Using the $L$-smoothness of the objective functions (Assumption 2), the gradient difference is bounded by the parameter difference
\begin{equation}
    \mathbb{E}\| \nabla \mathbf{F}(\mathbf{X}_k) - \nabla \mathbf{F}(\mathbf{X}_{k+1}) \|_F^2 \le L^2 \mathbb{E}\| \mathbf{X}_{k+1} - \mathbf{X}_k \|_F^2.
\end{equation}
To bound $\mathbb{E}\| \mathbf{X}_{k+1} - \mathbf{X}_k \|_F^2$, we use the model matrix update rule $\mathbf{X}_{k+1} = \mathbf{R}(\mathbf{X}_k - \eta \mathbf{V}_k)$. By subtracting $\mathbf{X}_k$ and introducing the consensus average $\mathbf{1}\bar{x}_k$, we have
\begin{align}
    \mathbf{X}_{k+1} - \mathbf{X}_k 
    &= (\mathbf{R} - \mathbf{I})\mathbf{X}_k - \eta \mathbf{R} \mathbf{V}_k \notag \\
    &= (\mathbf{R} - \mathbf{I})(\mathbf{X}_k - \mathbf{1}\bar{x}_k) - \eta \mathbf{R} \mathbf{V}_k,
\end{align}
where we used the property that $\mathbf{R}\mathbf{1} = \mathbf{1}$.
Using the basic inequality $\|A + B\|_F^2 \le 2\|A\|_F^2 + 2\|B\|_F^2$, we bound the expected parameter change as
\begin{equation}
\begin{split}
    \mathbb{E}\| \mathbf{X}_{k+1} - \mathbf{X}_k \|_F^2 &\le 2\|\mathbf{R}-\mathbf{I}\|^2 \mathbb{E}\|\mathbf{X}_k - \mathbf{1}\bar{x}_k\|_F^2 \\
    &\quad + 2\eta^2 \|\mathbf{R}\|^2 \mathbb{E}\|\mathbf{V}_k\|_F^2.
\end{split}
\end{equation}
Based on the norm equivalence in Lemma 1 ($\|\cdot\|_F^2 \le \|\cdot\|_R^2$), we have $\mathbb{E}\|\mathbf{X}_k - \Pi_R \mathbf{X}_k\|_F^2 \le \mathcal{E}_{x,k}$. Furthermore, to   bound $\mathbb{E}\|\mathbf{V}_k\|_F^2$, we decompose it around its true stationary projection $\Pi_C \mathbf{V}_k$, i.e.,
\begin{align}
    \mathbb{E}\|\mathbf{V}_k\|_F^2 &\le 2\mathbb{E}\|\mathbf{V}_k - \Pi_C \mathbf{V}_k\|_F^2 + 2\mathbb{E}\|\Pi_C \mathbf{V}_k\|_F^2 \notag \\
    &\le 2\mathcal{E}_{v,k} + 2n^2 \|\pi_C\|_F^2 \mathbb{E}\|\bar{m}_k\|^2, \label{eq:Vk_F_bound}
\end{align}
where we used the norm equivalence condition $\|\cdot\|_F^2 \le \|\cdot\|_C^2$, thus $\mathbb{E}\|\mathbf{V}_k - \Pi_C \mathbf{V}_k\|_F^2 \le \mathcal{E}_{v,k}$, and the property $\Pi_C \mathbf{V}_k = n \pi_C \bar{m}_k^\top$ established in Lemma 2. 

Substituting these expansions back into the momentum error recursion, we finally obtain:
\begin{align}
    \mathcal{E}_{m,k+1} &\le (1-\lambda) \mathcal{E}_{m,k} + \frac{2(1-\lambda)^2 L^2}{\lambda} \Big( \|\mathbf{R}-\mathbf{I}\|^2 \mathcal{E}_{x,k} \notag \\
    &\quad + 2\eta^2 \|\mathbf{R}\|^2 \mathcal{E}_{v,k} + 2\eta^2 n \|\mathbf{R}\|^2 \mathbb{E}\|\bar{m}_k\|^2 \Big) \notag \\
    &\quad + \lambda^2 n \sigma^2 \notag \\
    &= (1-\lambda) \mathcal{E}_{m,k} + \frac{C_{m,1}}{\lambda} \mathcal{E}_{x,k} \notag \\
    &\quad + \frac{C_{m,2} \eta^2}{\lambda} \mathcal{E}_{v,k} + \frac{C_{m,3} n \eta^2}{\lambda} \mathbb{E}\|\bar{m}_k\|^2 + \lambda^2 n \sigma^2,
\end{align}
where $C_{m,1} = 2(1-\lambda)^2 L^2 \|\mathbf{R}-\mathbf{I}\|^2$, $C_{m,2} = 4(1-\lambda)^2 L^2 \|\mathbf{R}\|^2$, and $C_{m,3} = 4(1-\lambda)^2 L^2 \|\mathbf{R}\|^2$. This completes the proof. \hfill $\square$

\vspace{1em} 
\subsection{Proof of lemma \ref{lem:linear_system}}
\label{app:proof_lemma7}
\begin{proof}
The goal is to assemble the individual recursive bounds into a unified linear dynamical system $e_k := [\mathcal{E}_{x,k}, \mathcal{E}_{v,k}, \mathcal{E}_{m,k}, \mathcal{E}_{M,k}]^\top$, where $\mathcal{E}_{M,k} := \mathbb{E}\|\bar{m}_{k-1}\|^2$ is the historical momentum state.

\textbf{Step 1. Bounding the Spatial Average Momentum.}
We first establish the spatial bound for $\mathbb{E}\|\bar{m}_k\|^2$ to substitute into the first three error states. Recall that $\bar{m}_k = \frac{1}{n}\mathbf{1}^\top M_k$. We decompose $\bar{m}_k$ by introducing the true gradient averages
\begin{equation}
\bar{m}_k = \frac{1}{n}\mathbf{1}^\top(M_k - \nabla F(X_k)) + \frac{1}{n}\mathbf{1}^\top \nabla F(X_k).
\end{equation}
Taking the squared Euclidean norm and applying Jensen's inequality (or $\|a+b\|^2 \le 2\|a\|^2 + 2\|b\|^2$), we get
\begin{align}
\mathbb{E}\|\bar{m}_k\|^2 & \le 2\mathbb{E}\left\|\frac{1}{n}\mathbf{1}^\top(M_k - \nabla F(X_k))\right\|^2 \notag \\
& + 2\mathbb{E}\left\|\frac{1}{n}\mathbf{1}^\top \nabla F(X_k)\right\|^2 \notag \\
& \le \frac{2}{n}\mathbb{E}\|M_k - \nabla F(X_k)\|_F^2 + 2\mathbb{E}\|\bar{G}_k\|^2 \notag \\
& = \frac{2}{n}\mathcal{E}_{m,k} + 2\mathbb{E}\|\bar{G}_k\|^2,
\end{align}
where $\bar{G}_k = \frac{1}{n}\sum_{i=1}^n \nabla f_i(x_{i,k})$. Next, we bound $\mathbb{E}\|\bar{G}_k\|^2$ by separating the consensus error and the global gradient
\begin{align}
\mathbb{E}\|\bar{G}_k\|^2 & \le 2\mathbb{E}\|\bar{G}_k - \nabla F(\bar{x}_k)\|^2 + 2\mathbb{E}\|\nabla F(\bar{x}_k)\|^2 \notag \\
& \le \frac{2L^2}{n}\mathbb{E}\|X_k - \mathbf{1}\bar{x}_k\|_F^2 + 2\mathbb{E}\|\nabla F(\bar{x}_k)\|^2 \notag \\
& \le \frac{2L^2}{n}\mathcal{E}_{x,k} + 2\mathbb{E}\|\nabla F(\bar{x}_k)\|^2.
\end{align}
Substituting this back, we establish the crucial spatial bound for the average momentum
\begin{equation}
\mathbb{E}\|\bar{m}_k\|^2 \le \frac{2}{n}\mathcal{E}_{m,k} + \frac{4L^2}{n}\mathcal{E}_{x,k} + 4\mathbb{E}\|\nabla F(\bar{x}_k)\|^2.
\label{eq:spatial_momentum}
\end{equation}

\textbf{Step 2. Constructing the Block-Triangular System.}
By substituting \eqref{eq:spatial_momentum} into Lemmas 4, 5, and 6, we completely decouple the intermediate variable $\mathbb{E}\|\bar{m}_k\|^2$ from the first three states $(\mathcal{E}_{x,k}, \mathcal{E}_{v,k}, \mathcal{E}_{m,k})$. Crucially, this operation guarantees that the top-left $3 \times 3$ block of our linear system is completely independent of the 4th state $\mathcal{E}_{M,k}$.

Finally, to incorporate the historical momentum $\mathcal{E}_{M,k}$ as the required 4th state, we rely on the convexity of the momentum update $\bar{m}_k = (1-\lambda)\bar{m}_{k-1} + \lambda \bar{g}_k$
\begin{align}
\mathbb{E}\|\bar{m}_k\|^2 & \le (1-\lambda)\mathbb{E}\|\bar{m}_{k-1}\|^2 + \lambda\mathbb{E}\|\bar{g}_k\|^2 \notag \\
& \le (1-\lambda)\mathcal{E}_{M,k} + \frac{2L^2\lambda}{n}\mathcal{E}_{x,k} + 2\lambda\mathbb{E}\|\nabla F(\bar{x}_k)\|^2 \notag \\ &+ \frac{\lambda^2\sigma^2}{n}.
\end{align}
This explicitly provides the 4th row of the system. We rigorously obtain the augmented closed-loop dynamics $e_{k+1} \le J_{\eta,\lambda} e_k + b\sigma^2 + h\mathbb{E}\|\nabla F(\bar{x}_k)\|^2$, where $J_{\eta,\lambda}$ natively forms a lower block-triangular matrix
\begin{equation}
J_{\eta,\lambda} = \begin{bmatrix}
\frac{1+\rho_R^2}{2} & c_1\eta^2 & \tilde{c}_0\eta^2 & 0 \\
c_2\lambda^2 & \frac{1+\rho_C^2}{2} & c_3\lambda^2 & 0 \\
c_4\frac{1}{\lambda} & c_5\frac{\eta^2}{\lambda} & 1-\lambda & 0 \\
c_{10}\lambda & 0 & 0 & 1-\lambda
\end{bmatrix}.
\end{equation}
The perturbation vectors are strictly extended to $b = [0, b_1\lambda^2, b_2\lambda^2, \frac{\lambda}{n}]^\top$ and $h = [h_1\eta^2, h_2\lambda^2, h_3\frac{\eta^2}{\lambda}, 2\lambda]^\top$. The 4th column of the upper $3 \times 3$ block consists entirely of zeros, ensuring a cascaded stability structure.
\end{proof}

\subsection{Proof of Lemma \ref{lem:stability} (Parameter Selection)}
\label{app:proof_lemma8}
\begin{proof}
We construct the unified Lyapunov function $\mathcal{V}_k := \mathbb{E}[F(\tilde{z}_k) - F^*] + p^\top e_k$, where $p = [p_1, p_2, p_3, p_4]^\top > 0 \in \mathbb{R}^4$ is the weight vector.

\textbf{Step 1. Expanding the Unified Lyapunov Function.} 
From the Descent Lemma, the virtual objective generates residual state errors across multiple dimensions, including the momentum tracking delay. We collect all these explicitly generated coefficients into a descent error vector $c_{dl} := \left[\frac{L^2 \eta c_\pi}{n}, c_z \eta, 0, \frac{5L^2 c_\pi^3 \eta^3}{4\lambda^2}\right]^\top \in \mathbb{R}^4$.

By applying the linear dynamics $e_{k+1} \le \tilde{J}_{\eta,\lambda} e_k + b\sigma^2 + \tilde{h} \mathbb{E}\|\nabla F(\bar{x}_k)\|^2$, the one-step difference evaluates to
\begin{align}
\mathcal{V}_{k+1} - \mathcal{V}_k \le & - \left(\frac{\eta c_\pi}{8} - p^\top \tilde{h}\right) \mathbb{E}\|\nabla F(\bar{x}_k)\|^2 \notag \\
& + (p^\top (\tilde{J}_{\eta,\lambda} - I) +c_{dl}^\top )e_k + \text{Total Noise},
\label{eq:lyapunov_diff}
\end{align}
where the total accumulated noise is bounded by $\text{Total Noise} := p^\top b \sigma^2 + \frac{L c_\pi^2 \eta^2 \sigma^2}{n} $. 

\textbf{Step 2. Spectral Stability and Final Bound.}
To guarantee global descent, we must dissipate the coupled error states $e_k$ by requiring $p^\top(\tilde{J}_{\eta,\lambda} - I) + c_{dl}^\top \le 0$, which is mathematically equivalent to solving the linear matrix inequalities $p^\top(I - \tilde{J}_{\eta,\lambda}) \ge c_{dl}^\top$. Instead of relying solely on existence theorems, we explicitly construct the strictly feasible weight vector $p = [p_1, p_2, p_3, p_4]^\top > 0$ to strictly quantify the parameter bounds. 

Let $p_1 = 1$. To strictly satisfy the bounds for the consensus error (2nd column of the inequality system), we set
\begin{equation}
    p_2 := \frac{12}{\Delta_C}(c_1 \eta^2 + c_z \eta).
\end{equation}
To satisfy the tracking error bounds (3rd column), we require $c_5 \lambda^2 p_2 - \lambda p_3 \le 0$. Since the momentum parameter satisfies $\lambda \in (0, 1)$, we can strictly bound this by omitting $\lambda$ to enforce a tighter coupling with the step size $\eta$, defining
\begin{equation}
    p_3 := c_5 p_2 = \frac{12 c_5}{\Delta_C}(c_1 \eta^2 + c_z \eta).
\end{equation}
To satisfy the historical momentum error bounds (4th column), we sum all generated lower bounds by substituting $p_2$ and $p_3$
\begin{equation}
    p_4 := \frac{8 c_3}{\lambda}\eta^2 + 8 c_6 \lambda \eta^2 p_2 + \frac{8 c_9 \eta^2}{\lambda^2} p_3 + \frac{10 L^2 c_\pi^3}{\lambda^3}\eta^3.
\end{equation}

Finally, we substitute these exact weights back into the stability upper bounds of the 1st column. Assuming the step size is sufficiently small such that $\eta \le 1$, we have $\eta^2 \le \eta$, allowing us to simplify the quadratic terms gracefully as $c_1 \eta^2 + c_z \eta \le (c_1 + c_z)\eta$. The most restrictive coupling condition arises from the tracking cross-term $c_7 \frac{1}{\lambda} p_3 \le \frac{\Delta_R}{16} p_1$. Substituting the simplified $p_3$ yields
\begin{equation}
    c_7 \frac{1}{\lambda} \frac{12 c_5}{\Delta_C} (c_1 + c_z)\eta \le \frac{\Delta_R}{16} \implies \eta \le \frac{\Delta_R \Delta_C}{192 c_5 c_7 (c_1 + c_z)}\lambda.
\end{equation}
Similarly, substituting $p_4$ into its constraint $c_{10}\lambda p_4 \leq \frac{\Delta_R}{16}$ generates fractional exponent bounds. To ensure this strictly holds, we bound its components individually, generating the constants $\mathcal{C}_3 := \left( \frac{\Delta_R \Delta_C}{6144 c_5 c_9 c_{10} (c_1+c_z)} \right)^{1/3}$ and $\mathcal{C}_2 := \left( \frac{\Delta_R}{640 L^2 c_\pi^3 c_{10}} \right)^{1/3}$. Combining all individual bounds, including the static topology bound $\frac{L^2 \eta c_\pi}{n} \leq \frac{\Delta_R}{16}$, and defining the dominant linear constant $\mathcal{C}_1 := \frac{\Delta_R \Delta_C}{192 c_5 c_7 (c_1+c_z)}$, we choose the step size $\eta$ as
\begin{equation}
    \eta \leq \min \left\{ 1, \ \frac{n \Delta_R}{16 L^2 c_\pi}, \ \mathcal{C}_1 \lambda, \ \mathcal{C}_2 \lambda^{1/3}, \ \mathcal{C}_3 \lambda^{2/3} \right\},
\end{equation}
ensures that all 12 coupled inequalities are simultaneously and strictly satisfied. Since the momentum coefficient $\lambda \in (0,1)$, the higher-order roots $\lambda^{1/3}$ and $\lambda^{2/3}$ are strictly larger than the linear term. Therefore, the linear boundary mathematically dominates the minimum, inherently yielding $\eta \le \mathcal{O}(\lambda)$. 

\end{proof}

\vspace{1em} 
\subsection{Proof of Theorem \ref{thm:convergence} (Convergence Rate)}
\label{app:proof_theorem1}

We construct the unified Lyapunov function $\mathcal{V}_k := \mathbb{E}[F(\tilde{z}_k) - F^*] + p^\top e_k$, where $p = [p_1, p_2, p_3, p_4]^\top > 0 \in \mathbb{R}^4$ is the weight vector.

\textbf{Step 1. Expanding the Unified Lyapunov Function.} 
By the definition of the newly aligned virtual sequence $\tilde{z}_k = \bar{x}_k - \frac{\eta c_\pi (1-\lambda)}{\lambda}\bar{m}_{k-1}$, the Descent Lemma strictly generates state errors at step $k$ without any forward index mismatch. 
The 4th element of $c_{dl}$ corresponds exactly to the historical momentum energy $\mathcal{E}_{M,k} := \mathbb{E}\|\bar{m}_{k-1}\|^2$ in the augmented state $e_k$. Evaluating the one-step difference of the Lyapunov function yields $\mathcal{V}_{k+1} - \mathcal{V}_k = \mathbb{E}[F(\tilde{z}_{k+1}) - F(\tilde{z}_k)] + p^\top e_{k+1} - p^\top e_k$. Substituting the Descent Lemma and the 4D linear dynamics $e_{k+1} \le \tilde{J}_{\eta,\lambda} e_k + b\sigma^2 + \tilde{h} \mathbb{E}\|\nabla F(\bar{x}_k)\|^2$, we rigorously obtain
\begin{align}
\mathcal{V}_{k+1} - \mathcal{V}_k \le & - \left(\frac{\eta c_\pi}{8} - p^\top \tilde{h}\right) \mathbb{E}\|\nabla F(\bar{x}_k)\|^2 \notag \\
& + \left( p^\top \tilde{J}_{\eta,\lambda} - p^\top + c_{dl}^\top \right) e_k + \text{Total Noise},
\end{align}
where the accumulated single-step noise is rigorously evaluated using the exact weights $p_2 = \mathcal{O}(\eta)$, $p_3 = \mathcal{O}(\eta)$, and $p_4 \le \mathcal{O}(\eta^2/\lambda)$ from Lemma 8
\begin{equation}
 \text{Total Noise} := p^\top b\sigma^2 + \frac{L c_\pi^2 \eta^2 \sigma^2}{n} \le \mathcal{O}\left( \frac{\eta^2 \sigma^2}{n} + \eta \lambda^2 \sigma^2 n \right).
\end{equation}

\textbf{Step 2. Spectral Stability and Final Bound.}
To guarantee global descent, we substitute the coupled error parameters. Furthermore, evaluating the gradient perturbation term $\tilde{h} = [h_1\eta^2, h_2\lambda^2, h_3\frac{\eta^2}{\lambda}, 2\lambda]^\top$ with our constructed weight vector yields $p^\top \tilde{h} = \mathcal{O}(\eta^2) + \mathcal{O}(\eta \lambda^2) + \mathcal{O}(\frac{\eta^3}{\lambda}) + \mathcal{O}(\frac{\eta^2}{\lambda} \cdot \lambda) \le \mathcal{O}(\eta)$. Hence, by choosing a sufficiently small step size $\eta$, we can strictly guarantee that $p^\top \tilde{h} \le \frac{\eta c_\pi}{16}$.

With the state errors eliminated, we seamlessly sum the inequality from $k=0$ to $T-1$
\begin{equation}
\mathcal{V}_T - \mathcal{V}_0 \le - \left(\frac{\eta c_\pi}{8} - p^\top \tilde{h}\right) \sum_{k=0}^{T-1} \mathbb{E}\|\nabla F(\bar{x}_k)\|^2 + T \cdot \text{Total Noise}.
\end{equation}
By bounding $p^\top \tilde{h} \le \frac{\eta c_\pi}{16}$, rearranging, and dividing by $\frac{\eta c_\pi}{16} T$, we directly obtain the exact sublinear convergence rate
\begin{equation}
\frac{1}{T}\sum_{k=0}^{T-1}\mathbb{E}\|\nabla F(\bar{x}_k)\|^2 \le \frac{16(\mathcal{V}_0 - \mathcal{V}_T)}{\eta c_{\pi} T} + \mathcal{O}\left(\frac{\eta\sigma^2}{n} + \lambda^2\sigma^2 n\right).
\end{equation}
This concludes the proof. \hfill $\square$

\subsection{Proof of Corollary \ref{cor:speedup} }
\label{app:proof_corollary1}
We start from the non-convex convergence rate established in Theorem 1
\begin{equation}
    \frac{1}{T} \sum_{k=0}^{T-1} \mathbb{E}\|\nabla F(\bar{x}_k)\|^2 \le \frac{16(\mathcal{V}_0 - \mathcal{V}_T)}{\eta c_\pi T} + \mathcal{O}\left( \frac{\eta \sigma^2}{n} + \lambda^2 \sigma^2 n \right).
\end{equation}

\textbf{Step 1. Non-negativity of the Lyapunov function.} 
By the definition of the Lyapunov function, $\mathcal{V}_T = \mathbb{E}[F(\tilde{z}_T) - F^*] + p^\top e_T$. Assuming the objective function is bounded below by the global minimum $F^*$, and given that the carefully constructed weight vector satisfies $p > 0$ with $e_T \ge 0$, it inherently holds that $\mathcal{V}_T \ge 0$. Therefore, we can safely drop $-\mathcal{V}_T$ from the right-hand side to obtain a valid upper bound.

\textbf{Step 2. Bounding the Initial Lyapunov Condition $\mathcal{V}_0$.} 
The initial Lyapunov value evaluates at step $k=0$ as $\mathcal{V}_0 = \mathbb{E}[F(\tilde{z}_0) - F^*] + p^\top e_0$, where theinner product expands to $p_1\mathcal{E}_{x,0} + p_2\mathcal{E}_{v,0} + p_3\mathcal{E}_{m,0} + p_4\mathcal{E}_{M,0}$. 

Under our initialization strategy in Algorithm 1, the historical momentum is conceptually $\bar{m}_{-1} = \mathbf{0}$. This strictly aligns the initial virtual sequence with the physical parameter: $\tilde{z}_0 = \bar{x}_0 - \frac{\eta c_\pi(1-\lambda)}{\lambda}\bar{m}_{-1} = x_0$. Thus, the initial expected function gap evaluates exactly to the true objective gap $F(x_0) - F^*$.

Furthermore, since all agents initialize at the identical starting point $x_{i,0} = x_0$, the initial parameter matrix is $X_0 = \mathbf{1}x_0^\top$. This yields perfect initial consensus, rigorously satisfying $\mathcal{E}_{x,0} \equiv 0$. Consequently, the potentially singular term $p_1\mathcal{E}_{x,0} \equiv 0$ vanishes perfectly. With $V_0 = \mathbf{0}$, the tracking error $\mathcal{E}_{v,0} = 0$ also vanishes. For the 4th state, $\mathcal{E}_{M,0} = \mathbb{E}\|\bar{m}_{-1}\|^2 = 0$. 

The only remaining error in the state vector is the initial momentum approximation, $\mathcal{E}_{m,0} = \mathbb{E}\|M_0 - \nabla F(X_0)\|_F^2 = \mathbb{E}\|\nabla F(X_0)\|_F^2$. The only remaining error in the state vector is the initial momentum approximation, $\mathcal{E}_{m,0} = \mathbb{E}\|M_0 - \nabla F(X_0)\|_F^2 = \mathbb{E}\|\nabla F(X_0)\|_F^2$. Because our explicit Lyapunov construction strictly scales $p_3$ with the step size (i.e., $p_3 = \mathcal{O}(\eta)$), the product $p_3 \mathcal{E}_{m,0} \to 0$ seamlessly as $\eta \to 0$. Thus, the inner product $p^\top e_0$ effectively collapses, and $\mathcal{V}_0$ is strictly bounded by the constant initial gap $F(x_0) - F^*$.

\textbf{Step 3. Parameter Coupling and Final Rate.} 
To strictly eliminate the topology-induced steady-state error $\mathcal{O}(\lambda^2 \sigma^2 n)$, we couple the step size with the momentum coefficient by setting $\eta = \mathcal{O}(\lambda^2)$. This natively satisfies the linear system stability condition $\eta \le \mathcal{O}(\lambda)$ since $\lambda \in (0, 1)$, and transforms the topology variance to $\mathcal{O}(\eta \sigma^2 n)$. The bound simplifies to
\begin{equation}
    \frac{1}{T}\sum_{k=0}^{T-1}\mathbb{E}\|\nabla F(\bar{x}_k)\|^2 \le \mathcal{O}\left( \frac{\mathcal{V}_0}{\eta T} \right) + \mathcal{O}( \eta \sigma^2 n ).
\end{equation}
Balancing the decaying terms ($\frac{1}{\eta T} \approx \eta n$) yields the optimal step size $\eta = \mathcal{O}(\sqrt{n/T})$, which is achieved by setting the momentum decay rate as $\lambda = \mathcal{O}\left((n/T)^{1/4}\right)$. Substituting this back establishes the exact sublinear convergence bound
\begin{equation}
    \frac{1}{T}\sum_{k=0}^{T-1}\mathbb{E}\|\nabla F(\bar{x}_k)\|^2 \le \mathcal{O}\left( \frac{\mathcal{V}_0}{\sqrt{nT}} \right) + \mathcal{O}\left( \sigma^2 \sqrt{\frac{n}{T}} \right).
\end{equation}
This confirms that the algorithm completely filters out the topology bias while maintaining a strictly bounded initial Lyapunov state, concluding the proof. 
\hfill $\square$

\end{document}